\long\def\change#1{{\color{blue} #1}}
\newtheorem{Thm}{Theorem}{\bfseries}{\itshape}
\newtheorem*{Thm*}{Theorem}{\bfseries}{\itshape}
\newtheorem{Cor}{Corollary}{\bfseries}{\itshape}
\newtheorem{Prop}[Cor]{Proposition}{\bfseries}{\itshape}
\newtheorem{Lem}[Cor]{Lemma}{\bfseries}{\itshape}
\newtheorem*{Lem*}{Lemma}{\bfseries}{\itshape}
{\bfseries}{\itshape}
{\bfseries}{\itshape}
{\bfseries}{\rmfamily}
{\scshape}{\rmfamily}
\newtheorem{Rem}[Cor]{Remark}{\scshape}{\rmfamily}
{\scshape}{\rmfamily}
\newtheorem{Claim}{Claim}{\bfseries}{\itshape}
\renewcommand\ge{\geqslant} \renewcommand\le{\leqslant}
\let\tildeaccent=\~ \let\hataccent=\^
\renewcommand\~[1]{\widetilde{#1}}
\def\<{\left<} \def\>{\right>} \def\({\left(} \def\){\right)}
\let\polishL=l \def\Zoladek.{\.Zol\c adek}
\def\codim{\operatorname{codim}}
 \def\ord{\operatorname{ord}}
\def\etc.{\emph{etc}.}
\def\:{\colon} \def\R{{\mathbb R}} \def\C{{\mathbb C}} \def\Z{{\mathbb
    Z}}  \def\Q{{\mathbb Q}} \def\P{{\mathbb P}}
\def\A{{\mathbb A}} 
\let\PolishL=\L 
\def\L{{\mathbb L}}
 \def\e{\varepsilon}
 \def\Lojas.{\PolishL ojasiewicz} \def\cN{{\mathcal
    N}}
 \def\cO{{\mathcal O}}
 \def\mult{\operatorname{mult}}
\def\OO{\mathcal{O}} \def\mm{\mathfrak{m}}
  \def\QQ{\mathbb{Q}}
  \def\FF{\mathbb{F}}
 \def\PP{\mathbb{P}}
\begin{document}

\title[Sharp bounds for the number of rational points]{Sharp bounds for the number of rational points on algebraic
  curves and dimension growth, over all global fields}

\author[Binyamini]{Gal Binyamini} \address{Weizmann Institute of
  Science, Rehovot, Israel} \email{gal.binyamini@weizmann.ac.il}

\author[Cluckers]{Raf Cluckers} \address{Univ.~Lille, CNRS, UMR 8524 -
  Laboratoire Paul Painlev\'e, F-59000 Lille, France, and KU Leuven,
  Department of Mathematics, B-3001 Leu\-ven, Bel\-gium}
\email{Raf.Cluckers@univ-lille.fr}
\urladdr{http://rcluckers.perso.math.cnrs.fr/}

\author[Kato]{Fumiharu Kato} \address{Department of Mathematics, Tokyo
  Institute of Technology, 2-12-1 Ookayama, Meguro, Tokyo 152-8551,
  Japan} \email{katobungen@gmail.com}

\thanks{ The authors would like to thank Wouter Castryck, Per
  Salberger and Floris Vermeulen for interesting discussions on the
  topics of the paper and the referees for careful comments. G.B. was partially supported by funding from the European
  Research Council (ERC) under the European Union's Horizon 2020
  research and innovation programme (grant agreement No 802107) and by
  the Shimon and Golde Picker - Weizmann Annual Grant. R.C.~was
  partially supported by KU Leuven IF C16/23/010 and the Labex CEMPI
  (ANR-11-LABX-0007-01).}

\subjclass[2020]{Primary 11D45; Secondary 14G05, 11G35}
\keywords{dimension growth conjecture, rational points of bounded height, varieties over global fields, heights in global fields, number of rational solutions of diophantine equations,
determinant method} 

\begin{abstract}
  Let $C\subset{\mathbb P}_K^2$ be an algebraic curve over a number field $K$,
  and denote by $d_K$ the degree of $K$ over ${\mathbb Q}$. We prove that the
  number of $K$-rational points of height at most $H$ in $C$ is
  bounded by $c d^{2}H^{2d_K/d}(\log H)^\kappa$ where
  $c,\kappa$ are absolute constants. We also prove analogous results
  for global fields in positive characteristic, and, for higher
  dimensional varieties.

  The quadratic dependence on $d$ in the bound as well as the exponent of $H$
  are optimal; the novel aspect is the quadratic dependence on $d$ which answers a question raised by
  Salberger. We derive new results on Heath-Brown's and Serre's dimension
  growth conjecture for global fields, which generalize in particular the results by the
  first two authors and Novikov from the case $K={\mathbb Q}$. The proofs
  however are of a completely different nature, replacing the real
  analytic approach previously used by the $p$-adic determinant
  method. The optimal dependence on $d$ is achieved by a key
  improvement in the treatment of high multiplicity points on mod $p$
  reductions of algebraic curves.
\end{abstract}

\maketitle

\section{Introduction and main results}

When bounding the number of rational points of height at most $H$ on
irreducible curves or higher dimensional varieties of degree $d$
inside $\P^n$ and $\A^n$, the optimal dependence on $H$ is well
understood uniformly throughout such varieties, essentially given by
an appropriate power of $H$ (see e.g. Theorem~\ref{thm:Pila:highdim}
below, quoted from \cite{Pila-ast-1995}). We concentrate on the
dependence on the degree $d$, and obtain 
quadratic
dependence, for both the situation of curves and of hypersurfaces (see
Theorems~\ref{thm:main},~\ref{thm:Pila:highdim;d2},~\ref{thm:main:dg},~\ref{thm:main:dg:aff} and~\ref{thm:main:Pila:d2K}). Our methods are rather elementary, and
work over all global fields $K$ and not just over $\Q$. Previously,
polynomial dependence in $d$ was known in these situations by
\cite{CCDN-dgc,CDHNV-dgc,Pared-Sas,Vermeulen:p}, and, perhaps
surprizingly, this serves as an important ingredient to us. In recent
work by Binyamini, Cluckers and Novikov \cite{BCN-Sal}, quadratic
dependence in $d$ in the curve case with $K=\Q$ was obtained using
one-dimensional real analytic methods, in particular, a criterion by
P\'olya. Here we extend that result to all global fields and to any dimension using a
different approach based on the $p$-adic determinant method. For a
sketch of the argument see Section~\ref{sec:proof-sketch}.

\subsection{Curves}

Around 2002, Salberger raised the question of whether for any $d$, any
irreducible algebraic curve $C\subset \P^n_\Q$ of degree $d$ and any
$H>1$, $\e>0$ one has
\begin{equation}\label{eq:SalQ}
  \# C(\Q,H) \le c d^{2+\varepsilon}H^{\frac{2}{d}+\varepsilon},
\end{equation}
for some $c=c(n,\e)$, where $C(\Q,H)$ denotes the set of rational
points on $C$ with height at most $H$, see \cite{Salberger-preprint},
\cite[below Theorem 0.12]{Salberger-dgc}. This question refines
Heath-Brown's projective variant \cite[Theorem 3]{Heath-Brown-Ann}
of Bombieri-Pila's (affine) results \cite[Theorem 5]{bombieri-pila},
\cite[Main Theorem]{Pila-density-curve} and was motivated by the
Dimension Growth Conjecture, which now is a theorem 
\cite[Theorems 0.1 and
0.3]{Salberger-dgc} (except for the uniform degree $3$ case). 

In this paper, we answer this question about the bounds
(\ref{eq:SalQ}) 
more generally over any global field $K$ (of any characteristic), as
follows.


Let $K$ be a global field. Suppose that $K$ is of degree $d_K$ over
$\Q$ or over $\mathbb{F}_q(t)$ and in the latter case that $K$ is
separable over $\mathbb{F}_q(t)$ and that $K$ has $\mathbb{F}_q$ as
field of constants (as in Section 2.1 of \cite{Pared-Sas}), with $q$ a
power of a prime number. Finally let $C(K,H)$ denote the set of
$K$-rational points on $C$ with absolute projective multiplicative
height at most $H$, see~(\ref{eq:abs:h}) below.

\begin{Thm}\label{thm:main}
  Let $K$ and $d_K$ be as above, and let $n>1$ be given. There is a
  constant $c=c(K,n)$ and an absolute constant $\kappa$ such that for
  any $d>0$, any irreducible algebraic curve $C$ of degree $d$ in
  $\P^n_K$ and any $H>2$ one has
  \begin{equation}\label{eq:Sal:log}
    \# C(K,H) \le c d^{2}H^{\frac{2d_K}{d}}(\log H)^\kappa,
  \end{equation}
Furthermore, one may take $\kappa=12$.
\end{Thm}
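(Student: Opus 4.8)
The plan is to run the $p$-adic determinant method applied to the reduction of $C$ modulo a suitably chosen prime $\mathfrak{p}$ of $K$, combining it with the known polynomial-dependence-in-$d$ bounds of \cite{CCDN-dgc,CDHNV-dgc,Pared-Sas,Vermeulen:p} which, as the introduction indicates, serve as an input rather than an obstacle. First I would reduce to the case where $n=2$: a projection argument (generic linear projection $\P^n_K \dashrightarrow \P^2_K$, as in Heath-Brown and Salberger) sends $C$ to a plane curve of the same degree, multiplying the point count by at most a constant depending on $n$ and not spoiling the shape of the bound; the fibers of bounded height are controlled since the height of a point does not grow too much under such a projection. After that, the work is genuinely about plane curves $C \subset \P^2_K$ of degree $d$.

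Next I would set up the height machinery over the global field $K$: rational points of absolute multiplicative height at most $H$ correspond, after clearing denominators and choosing representatives, to triples of $K$-integers (in the appropriate sense) of controlled size, and the number of residues modulo a prime $\mathfrak{p}$ of norm $\approx q^{f}$ that a bounded-height point can hit is governed by $\log H$ and $d_K$. The core of the determinant method is then: for each residue point $\bar{x} \in C(\mathbb{F}_{\mathfrak{p}})$, the set of height-$\le H$ rational points of $C$ reducing to $\bar{x}$ lies on an auxiliary hypersurface of controlled degree, obtained by forcing the vanishing of a determinant of monomials evaluated at those points; a counting of lattice points / an induction on dimension (or a Bezout argument with $C$) then bounds the number of such points. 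Choosing the optimal size of $\mathfrak{p}$ — roughly $q^{f} \approx H^{2 d_K / d}$ up to logarithmic factors — balances the number of residue classes against the per-class bound, yielding the exponent $H^{2 d_K/d}$ and a factor $(\log H)^{\kappa}$ from the auxiliary choices (counting primes, accounting for the archimedean/degree contributions); the explicit $\kappa = 12$ comes from bookkeeping these logarithmic losses.

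The main obstacle — and the novel contribution flagged in the abstract — is obtaining the \emph{quadratic} dependence on $d$ rather than merely polynomial. The naive determinant method loses powers of $d$ when summing over residue points of high multiplicity on the mod-$\mathfrak{p}$ reduction $\bar C$: a point of multiplicity $m$ can a priori carry up to $m$ or $m^2$ rational points, and there can be many such points, so the crude bound overcounts. The key improvement will be a refined treatment of these high-multiplicity points: one should show that the total contribution of all high-multiplicity points on $\bar C$ is controlled, e.g. by bounding $\sum_{\bar x} \binom{m_{\bar x}}{2}$ or a similar quantity via the genus / Plücker-type formula for plane curves (the sum of $\binom{m_{\bar x}}{2}$ over singular points is $O(d^2)$), so that the bad locus contributes $O(d^2)$ rather than $O(d^{\text{large}})$. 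Here is precisely where the polynomial-dependence results enter as a black box: they handle the generic/low-multiplicity part with the wrong power of $d$ but a power that, once fed through the multiplicity bookkeeping, is amplified correctly — so the final bound is $c\, d^2 H^{2 d_K/d} (\log H)^{12}$ as claimed. I expect the genus/multiplicity estimate over $\mathbb{F}_{\mathfrak{p}}$, uniform in the (possibly singular, possibly non-reduced) reduction and independent of the characteristic, to be the technically delicate step requiring the most care.
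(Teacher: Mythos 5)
Your overall framing (reduce to plane curves, run a $p$-adic determinant method, treat high-multiplicity points of the reduction separately) matches the paper's, but the mechanism you propose for getting $d^2$ is not the one that works, and as written it has a genuine gap. The paper first splits into two regimes: if $d\le(\log H)^2$ the known bound $\#C(\Q,H)\ll d^4H^{2/d}$ from \cite{CCDN-dgc} already gives $d^2(\log H)^4H^{2/d}$, and \emph{that} is the only place the polynomial-in-$d$ black box enters --- not, as you suggest, to "handle the generic part with the wrong power of $d$ amplified correctly," which is not a meaningful step. In the remaining regime $d>(\log H)^2$ one has $H^{2/d}=O(1)$, and the primes are chosen of size between $\log H$ and $M(\log H)^4$ --- \emph{not} of size $\approx H^{2d_K/d}$ as in the classical method and as you propose. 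Your choice would give $\sim dH^{2d_K/d}$ residue points on $C_{\mathfrak p}$, each contributing a Bézout factor $\sim d^2$, i.e.\ at least a cubic loss in $d$; the whole point of the regime restriction is that with $p\sim(\log H)^4$ there are only $(\log H)^8$ residue classes per prime and $(\log H)^4$ primes, so one gets $(\log H)^{12}$ auxiliary curves of degree $d-1$, each meeting $C$ in $\le d(d-1)$ points. You are also missing the essential multi-prime step: a point whose reduction has high multiplicity modulo \emph{every} chosen prime is captured because an interpolation determinant of degree $N\log H$ built on such points is divisible by all $\sim(\log H)^4$ primes, forcing it to vanish and yielding one additional curve of degree $N\log H<d$. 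With a single prime the high-multiplicity locus cannot be disposed of.

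Your proposed treatment of the high-multiplicity points is also not adequate. The relevant threshold is $\mu_P\ge d/\log H$ (below it, $H^{2\mu_P/d}\sim1$ and the standard determinant method applies unchanged), and what is needed is a structural statement: all points of multiplicity $>D/k$ on a degree-$D$ plane curve over $\FF_p$ lie on a curve of degree $O(k)$ (Proposition~\ref{lem:alpha}, proved via intersection theory with an auxiliary derivative cycle). Your substitute, the Pl\"ucker-type bound $\sum_{\bar x}\binom{m_{\bar x}}{2}=O(d^2)$, is false for the reductions that actually occur: $C_{\mathfrak p}$ may be non-reduced or highly reducible (e.g.\ a multiple of a lower-degree curve has \emph{every} point of high multiplicity), and even where the bound holds it only counts the bad points rather than interpolating them by a low-degree curve, which is what the multi-prime vanishing argument requires.
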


The quadratic dependence in the upper bound from (\ref{eq:Sal:log}) is
optimal by \cite[Section 6]{CCDN-dgc}, from which one easily finds a
constant $c'=c'(K)>0$, arbitrarily large values $d$, $H$ and
irreducible curves $C$ in $\P^2_\Q$ of degree $d$ which witness the
lower bound 
\begin{equation}\label{eq:lowerb}
  c' d^{2}H^{\frac{2d_K}{d}}\le \# C(K,H).
\end{equation}

In \cite{BCN-Sal}, the first two authors together with Novikov
recently obtained Theorem~\ref{thm:main} for $K=\Q$, using real
analytic methods which do not seem to extend to the non-real case, and
neither do they extend to the higher dimensional case that we treat in
the next section.
We will give a variant of Theorem \ref{thm:main} for affine curves in Corollary~\ref{cor:AK}.

\subsection{Hypersurfaces}\label{sec:highdimPila}

In higher dimensions, recall the following general bounds by Pila.
Below $X(\Z,H)$ denotes the set of integral points on $X$ with height
at most $H$.

\begin{Thm}[Theorem A of \cite{Pila-ast-1995}]\label{thm:Pila:highdim}
  Let $X\subset \A_\Q^n$ be irreducible, of dimension $m$ and of
  degree $d>0$ for some $n>1$, and let $H> 2$. For any $\e>0$ there is
  a constant $c = c(d,n,\e)$ such that
$$
\# X(\Z,H)\le c\cdot H^{m-1+1/d+\e}.
$$
\end{Thm}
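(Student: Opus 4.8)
The plan is to use the real-analytic determinant method of Bombieri and Pila, set up as an induction on the dimension $m$: the essential case is $m=1$ (a curve), and the higher-dimensional case is reduced to it by slicing with hyperplanes.

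For $m=1$, I would first apply a generic linear projection with bounded integer coefficients to reduce to the case of a plane curve $C\subset\A^2$ of degree $d$ (this is birational onto its image, changes heights by a bounded factor, and identifies only finitely many points). Rescaling by $H^{-1}$ turns $C(\Z,H)$ into a subset of the lattice $(H^{-1}\Z)^2$ on a degree-$d$ curve $C'$ inside $[-1,1]^2$, and the next step is to cover the real points of $C'$ in the square by $\ll_d1$ real-analytic arcs, on each of which one coordinate is a real-analytic function of the other with derivatives up to a prescribed order bounded in terms of $d$ alone; here one excises small neighbourhoods of the finitely many singular points and points of vertical tangency. I would then partition each such arc into $\ll1/\delta$ subarcs of length $\delta$ and run the determinant argument on each: on a subarc carrying at least $N$ of the lattice points, pick $N$ monomials in $x,y$ and form the $N\times N$ matrix with $(k,l)$ entry the $k$-th monomial evaluated at the $l$-th point. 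Clearing the factor $H$ from each row, this determinant is $H^{-W}$ times an integer, with $W$ the sum of the degrees of the chosen monomials, so it either vanishes or has absolute value $\ge H^{-W}$; on the other hand, Taylor-expanding the monomials along the analytic arc shows it is $\ll_{d,N}\delta^{T}$ with $T=0+1+\cdots+(N-1)$. Choosing $\delta$ just below $H^{-W/T}$ forces the determinant to vanish, so the points of the subarc lie on an auxiliary plane curve $g=0$ of degree $\le e$. The sharp exponent comes from taking the $N$ monomials to be a set of lowest degree that is linearly independent modulo the defining polynomial of $C'$: then $g$ is not a multiple of that polynomial, so $g=0$ meets $C'$ in $\le de$ points by B\'ezout, and one computes $N\approx de$, $W\approx\tfrac12 de^2$, $T\approx\tfrac12(de)^2$, so that $W/T\to1/d$ as $e\to\infty$. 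Summing $\le\max(N,de)$ over the $\ll H^{W/T}$ subarcs and letting $e=e(\e)$ grow yields $\#C(\Z,H)\ll_{d,\e}H^{1/d+\e}$.

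For $m\ge2$ I would argue by induction. After a change of coordinates by a bounded integer matrix in general position, $x_n$ is non-constant on $X$ and its generic level set $X_a=X\cap\{x_n=a\}$ is irreducible of dimension $m-1$ and degree $d$, by Bertini, while the finitely many exceptional level sets have dimension $\le m-1$ and degree $\le d$. Applying the inductive hypothesis to the $\ll H$ slices $X_a$ with $a\in\Z$, $|a|\le H$, gives $\#X(\Z,H)\le\sum_a\#X_a(\Z,H)\ll_{d,n,\e}H\cdot H^{(m-1)-1+1/d+\e}=H^{m-1+1/d+\e}$, the exceptional slices contributing only $\ll H^{m-1+\e}$.

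The step I expect to be the main obstacle is justifying the phrase ``in general position'': the slicing fails precisely when no coordinate direction gives a non-degenerate pencil of slices, which forces $X$ to be swept out by a positive-dimensional family of $(m-1)$-dimensional linear subspaces --- a cone or a scroll. Such $X$ have to be handled directly: if $\deg X=1$ the bound is immediate, and otherwise one parametrizes the ruling by a curve, controls the height of the linear subspace through a given rational point of $X$, and combines the $m=1$ bound for that curve with a count of the integral points of height $\le H$ on an affine-linear subspace with possibly large coefficients. A second delicate point, already for $m=1$, is the construction of the covering by real-analytic arcs with derivative bounds depending only on $d$; this relies on the compactness of the family of rescaled degree-$d$ curves together with a stratification removing the singular locus and the locus of vertical tangency, and in the extremal configurations the number of arcs one needs in order to keep the derivatives bounded is itself what produces the exponent $1/d$.
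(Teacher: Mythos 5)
This statement is quoted from Pila's Ast\'erisque paper and is not proved in the present paper at all (it is used as a black box), so there is no internal proof to compare against; judged against the original argument, your sketch is essentially a faithful reconstruction of it: the Bombieri--Pila determinant method for the base case $m=1$ (generic plane projection, rescaling, decomposition into $O_d(1)$ analytic arcs with controlled derivatives, the $W/T\to 1/d$ bookkeeping, B\'ezout against an auxiliary curve of degree $e=e(\e)$), followed by Pila's induction on dimension via slicing, and the exponent computations are correct. Two remarks on the slicing step. The obstacle you single out is milder than you suggest: for a \emph{geometrically} irreducible $X$ of dimension $\ge 2$, Bertini's irreducibility theorem makes the locus of hyperplanes $H$ with $X\cap H$ reducible (or of wrong dimension or degree) a proper closed subset $Z$ of the dual projective space; the pencils of parallel affine hyperplanes correspond to lines through the fixed point dual to the hyperplane at infinity, and since these lines sweep out the whole dual space, the set of directions whose entire pencil lies in $Z$ is itself proper closed. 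Hence a single direction realizable by an integer vector of size $O_{d,n}(1)$ already makes all but $O_{d,n}(1)$ slices geometrically irreducible of dimension $m-1$ and degree exactly $d$ (exactness matters, since a drop in degree would worsen the exponent $1/d$), and no separate treatment of cones or scrolls is required. What your sketch does omit is the reduction from $\Q$-irreducible to geometrically irreducible: if $X$ is irreducible over $\Q$ but not absolutely irreducible, its rational points lie in the intersection with a Galois conjugate, a variety of dimension $\le m-1$ and degree $\le d^2$, which the trivial projection bound $\ll_{d,n}H^{m-1}$ already covers. With that addendum the proposal is a correct proof of the cited theorem.
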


When $X$ from Theorem~\ref{thm:Pila:highdim} is furthermore a
hypersurface, then \cite[Proposition 2.6]{CDHNV-dgc} gives the more
precise bounds
\begin{equation}\label{eq:de1:d}
  \# X(\Z,H)\le  c d^e H^{n-2+1/d}\log H,
\end{equation}
for some $c$ and $e$ depending only on $n$. Again by \cite[Section
6]{CCDN-dgc} it follows that a quadratic dependence on $d$ (that is,
with $e=2$) in the upper bounds (\ref{eq:de1:d}) is best possible. We
obtain this at the cost of extra $\log H$ factors, as follows.
\begin{Thm}\label{thm:Pila:highdim;d2}
  Let $X\subset \A_\Q^n$ be an irreducible hypersurface of degree
  $d>0$ for some $n>1$, and let $H> 2$. Then there are constants
  $c = c(n)$ and $\kappa=\kappa(n)$ such that
$$
\# X(\Z,H)\le cd^2 H^{n-2+1/d}(\log H)^\kappa .
$$
\end{Thm}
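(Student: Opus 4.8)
The plan is to reduce Theorem~\ref{thm:Pila:highdim;d2} to the curve case, Theorem~\ref{thm:main}, by a fibration/slicing argument together with the already-known polynomial-in-$d$ dimension growth bounds. First I would set up the projective closure: let $\bar X\subset\P^n_\Q$ be the closure of $X$, a hypersurface of degree $d$ and dimension $n-2$. Counting $X(\Z,H)$ is comparable (up to the point at infinity and standard normalizations relating the affine-integral and projective-multiplicative heights over $\Q$) to counting $\bar X(\Q,H)$. I would then slice $\bar X$ by a pencil, or more efficiently by a family of hyperplanes, to express the point count as a sum over curve sections. Concretely, fix coordinates and for each choice of the ``horizontal'' coordinates $(x_1,\dots,x_{n-3})$ (there are $O(H^{n-3})$ such choices up to the relevant scaling), the residual slice of $\bar X$ is a curve of degree at most $d$ in a $\P^2$ (or $\P^3$); one then applies Theorem~\ref{thm:main} with $K=\Q$, $d_K=1$, to each such curve section, provided the section is geometrically irreducible.

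The key technical nuisance — and the step I expect to be the main obstacle — is handling the slices that fail to be irreducible, or that degenerate (e.g. become a cone, or contain lower-dimensional components, or pick up a linear component). This is exactly the kind of bookkeeping that makes the hypersurface case genuinely harder than the curve case. I would deal with it in two stages: (i) a Bertini-type argument ensuring that for a suitably chosen generic pencil of hyperplanes, all but a controlled number of slices are geometrically irreducible of degree $d$; (ii) for the exceptional slices, and for the lower-dimensional ``bad locus'' of $\bar X$ where the slicing degenerates, fall back on the polynomial-in-$d$ dimension growth results of \cite{CCDN-dgc,CDHNV-dgc,Pared-Sas}, which by \eqref{eq:de1:d} already give a bound $c\,d^e H^{n-2+1/d}\log H$ — this is more than enough as an error term since we are allowing extra $\log H$ factors anyway, and crucially the exceptional set has codimension $\ge 1$ so its contribution is smaller in $H$ by a factor roughly $H^{-1}$ (or carries a strictly smaller degree). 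One must be careful that the number of exceptional slices and their total degree is bounded \emph{linearly} in $d$, so as not to spoil the quadratic dependence; this is where a careful choice of the pencil, together with a discriminant/ramification count, is needed.

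Summing up: writing $N$ for $\#\bar X(\Q,H)$, I would obtain a bound of the shape
\[
N \;\le\; \underbrace{O(H^{n-3})\cdot c\, d^2 H^{2/d}(\log H)^{12}}_{\text{irreducible slices, Thm.~\ref{thm:main}}} \;+\; \underbrace{c\, d^{e} H^{n-3+1/d}(\log H)^{O(1)}}_{\text{bad locus / exceptional slices}},
\]
and note that $H^{n-3}\cdot H^{2/d}\le H^{n-2+1/d}$ for $H>2$ and $d\ge 1$ (indeed $n-3+2/d\le n-2+1/d$ is equivalent to $1/d\le 1$), while the second term is dominated by the first. Absorbing constants into $c=c(n)$ and taking $\kappa=\kappa(n)$ large enough to swallow the slicing losses (the $(\log H)^{12}$ from Theorem~\ref{thm:main} plus the $O(1)$ powers from the induction on dimension and the error term) yields
\[
\# X(\Z,H)\;\le\; c\,d^2 H^{n-2+1/d}(\log H)^{\kappa},
\]
as claimed. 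A cleaner packaging would be to prove the statement by induction on $n$, the base case $n=2$ being trivial ($\#X(\Z,H)\le d$ for a degree-$d$ hypersurface in $\A^2_\Q$, i.e. a plane curve, already via Bézout on a generic line, or directly from Theorem~\ref{thm:main}) and the inductive step being precisely the slicing argument above with the residual family now a family of hypersurfaces in $\A^{n-1}_\Q$ rather than curves; the quadratic dependence on $d$ is preserved at each step because only the curve estimate \eqref{eq:Sal:log} contributes a $d^2$, all other degree factors entering linearly through Bézout-type counts of exceptional fibers.
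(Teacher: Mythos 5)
Your slicing strategy is not the route the paper takes (the paper runs the $p$-adic determinant method directly in $n$ variables via Proposition~\ref{prop:g:Pila}, capturing high-multiplicity points with Proposition~\ref{lem:alpha:highdim:1}, and then finishes each intersection $X\cap\{g_i=0\}$ with a Schwartz--Zippel count of $\le cd(d-1)H^{n-2}$ --- no irreducibility of slices is ever needed), and as written your argument has a genuine gap precisely at the step you flag as ``the main obstacle''. The problem is not merely bookkeeping. If you slice by coordinate hyperplanes, the fibres of an irreducible degree-$d$ hypersurface can \emph{all} have degree $d'\ll d$: for $X=\{x_n=x_{n-1}^2+f(x_1,\dots,x_{n-2})\}$ with $\deg f=d$, every slice is a degree-$2$ parabola, and the curve bound then yields $H^{1/2}$ per slice, so summing over the $\asymp H^{n-2}$ slices gives $H^{n-2+1/2}$, far above the target $H^{n-2+1/d}$. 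A generic linear change of coordinates fixes the \emph{generic} fibre, but then the whole weight of the proof falls on the exceptional fibres: any slice containing a component of degree $2$ already contributes $\asymp H^{1/2}\gg H^{1/d}$, so you must show that the locus of parameters $a$ with degenerate or reducible fibre has codimension $\ge 1$ \emph{and} degree $O(d\cdot\mathrm{polylog}\,H)$ --- otherwise either the exponent of $H$ or the quadratic dependence on $d$ is lost. You assert this ``discriminant/ramification count'' is linear in $d$ but give no argument, and no such clean linear bound is available in general; this is exactly the difficulty that makes dimension growth hard and that the paper's Schwartz--Zippel endgame is designed to avoid.

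Two further concrete errors: fixing $x_1,\dots,x_{n-3}$ on a hypersurface in $\A^n_\Q$ (which has dimension $n-1$) produces surfaces, not curves --- you must fix $n-2$ coordinates, giving $O(H^{n-2})$ slices each bounded by the \emph{affine} curve estimate $cd^2H^{1/d}(\log H)^\kappa$ of Corollary~\ref{cor:AQ}; and the base case $n=2$ is not ``trivial with $\#X(\Z,H)\le d$'': a hypersurface in $\A^2_\Q$ is a plane curve, a line already has $\asymp H$ integral points of height $\le H$, and the correct base case is Corollary~\ref{cor:AQ} itself, i.e.\ the (non-trivial) Bombieri--Pila-type bound with quadratic dependence on $d$. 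To repair the proof you would either have to supply the missing uniform control of degenerate fibres, or abandon slicing and, as the paper does, produce $c(\log H)^{\kappa}$ auxiliary forms of degree $d-1$ by the determinant method in all $n$ variables and count $X\cap\{g_i=0\}$ directly.
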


We will also give a version for global fields $K$ of Theorem
\ref{thm:Pila:highdim;d2}, see Theorem~\ref{thm:main:Pila:d2K} below,
where we will use a generalization $X(\cO_K,H)$ of $X(\Z,H)$ with
$\cO_K$ the ring of integers of
$K$, 
see (\ref{eq:cOK}) of Section~\ref{sec:prel} (following
\cite{Pared-Sas}, \cite{Bomb-Gubl}).

\subsection{Dimension growth}\label{sec:dgcq}

For dimension growth results, we again get quadratic
dependence on $d$, and, we improve the degree $3$ and $4$ cases by
removing $H^\e$ and replacing it with a power of $\log H$, for all
global fields $K$. Furthermore, the above results for curves give a
simplified approach to dimension growth results for all $K$ and all
degrees $d>2$, compared to the dimension growth results from \cite{Pared-Sas,Salberger-dgc,Vermeulen:p}. This simplification is mentioned as the motivation for Salberger's question
from \cite[below Theorem 0.12]{Salberger-dgc}. 
Perhaps the most surprising aspect is the quadratic
dependence on $d$ for dimension growth (Theorems~\ref{thm:main:dg} and \ref{thm:main:dg:aff}), as well as in Theorem~\ref{thm:Pila:highdim;d2}, since this is new for any $K$; the other mentioned new aspects for dimension growth (namely, the cases of degrees $3$ and $4$, and, the simplified approach) follow similarly from our results for curves (Theorem \ref{thm:main} and Corollary \ref{cor:AK}) as for the case $K=\Q$ in \cite{BCN-Sal}.

\begin{Thm}[Projective dimension growth]\label{thm:main:dg}
  Let $K$ and $d_K$ be as above, and let $n>1$ be given. There are
  constants $c=c(K,n)$ and $\kappa=\kappa(n)$ such that for any
  $d\ge 4$, any irreducible hypersurface $X$ of degree $d$ in $\P^n_K$
  and any $H>2$ one has
  \begin{equation}\label{eq:dg}
    \# X(K,H) \le c d^{2}H^{d_K\cdot\dim X}(\log H)^\kappa.
  \end{equation}
  Furthermore, for any irreducible hypersurface $X$ of degree $3$ in
  $\P^n_K$, one has
  \begin{equation}\label{eq:dg3}
    \# X(K,H) \le c H^{d_K\cdot(\dim X -1 + 2/\sqrt{3} )}(\log H)^\kappa.
  \end{equation}
\end{Thm}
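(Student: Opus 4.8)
The plan is to deduce the dimension growth bounds from the curve bound of Theorem~\ref{thm:main} (together with its affine counterpart Corollary~\ref{cor:AK}), exactly mimicking the reduction already used for $K=\Q$ in \cite{BCN-Sal}, but now carried out over an arbitrary global field $K$. The standard strategy for dimension growth is a \emph{projection/fibration} argument: given an irreducible hypersurface $X\subset\P^n_K$ of degree $d\ge 4$, one covers $X(K,H)$ by a controlled number of curves and applies the curve estimate to each. Concretely, one first reduces to the affine hypersurface case by the usual decomposition of $\P^n_K$ into $n+1$ affine charts and passing to the affine cone or an affine patch; here one must be a little careful that the projective height $H$ translates into an affine height bounded by $cH$ with $c=c(K)$, which follows from the height comparison built into the definition~(\ref{eq:abs:h}) and the conventions of Section~\ref{sec:prel}. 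Then, for $X\subset\A^n_K$ irreducible of degree $d$, one slices by a pencil of hyperplanes (or uses a generic linear projection to $\A^2_K$): the points of $X(K,H)$ lying in a fixed fibre lie on an affine curve of degree $\le d$, and the number of relevant fibres is $\le c\,H^{d_K(\dim X-1)}$ up to the logarithmic factors. Applying Corollary~\ref{cor:AK} to each fibre gives a per-fibre bound of $c\,d^2 H^{2d_K/d}(\log H)^{\kappa'}$, and multiplying yields $c\,d^2 H^{d_K(\dim X-1)+2d_K/d}(\log H)^{\kappa''}$. Since $d\ge 4$ forces $\dim X-1+2/d\le \dim X$ with the inequality becoming $\dim X - 1 + 1/2 \le \dim X$ only at $d=4$, one absorbs the gap: more precisely for $d\ge 5$ one has $2/d<1/2$ so there is slack, and for $d=4$ one needs the sharper input $2d_K/d = d_K/2$ and an elementary argument (as in \cite{Salberger-dgc,CCDN-dgc}) that the curves of degree $4$ arising as fibres can be handled without loss; this bookkeeping is where the constant $\kappa(n)$ is determined.

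For the degree $3$ bound~(\ref{eq:dg3}), the naive fibration gives exponent $d_K(\dim X-1+2/3)$, which is \emph{worse} than the claimed $d_K(\dim X-1+2/\sqrt3)$ only if one is careless — in fact $2/3<2/\sqrt3$, so one might worry the naive bound is already stronger; the point is rather the opposite, namely that for cubic hypersurfaces the irreducible plane curves that arise as hyperplane sections are themselves of degree $3$, and applying the curve bound of Theorem~\ref{thm:main} with $d=3$ gives exponent $2d_K/3$ per fibre, for a total of $d_K(\dim X-1)+2d_K/3 = d_K(\dim X-1+2/3)$, which is indeed $\le d_K(\dim X - 1 + 2/\sqrt3)$. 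So for $d=3$ the claimed bound follows a fortiori from the $d=3$ instance of the curve theorem; the slightly weaker exponent $2/\sqrt3$ is stated because one wants a clean statement uniform in the method, and in any case equality $2/3 = 2/\sqrt3$ does not hold, so~(\ref{eq:dg3}) holds with room to spare. (One checks $2/\sqrt3\approx1.1547>2/3\approx0.667$.) Thus both displays reduce cleanly to the curve estimates once the chart-and-fibre reduction is set up.

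The main obstacle is the \emph{uniformity of the fibration over a general global field}: one must produce, for each $d$ and each $X$, a linear projection $\pi\colon\A^n_K\dashrightarrow\A^2_K$ (or a pencil of hyperplanes) defined over $K$ such that the fibres are curves of degree $\le d$ with the number of fibres meeting $X(K,H)$ bounded by $c(K,n)H^{d_K(\dim X-1)}(\log H)^{O(1)}$, \emph{with $c$ independent of $d$}. Over $\Q$ this is handled in \cite{BCN-Sal}; over a general $K$, and especially in positive characteristic, one must ensure that a suitable generic projection exists with coefficients of bounded height (so that images of height-$H$ points have height $\le cH$) and that inseparability phenomena in characteristic $p$ do not cause the generic fibre to be non-reduced or of the wrong degree — this is exactly the kind of issue already navigated in \cite{Pared-Sas,Vermeulen:p} for dimension growth over global fields, and we import those normalizations. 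A secondary technical point is keeping the power of $\log H$ uniform in $n$ (hence the dependence $\kappa=\kappa(n)$ rather than an absolute constant): each application of the curve theorem contributes $(\log H)^{12}$, and the fibration over $\dim X - 1 \le n-2$ parameters may contribute further logarithmic factors from lattice-point counting, so $\kappa(n)$ grows linearly in $n$. None of these steps is deep; they are the same reductions as in \cite{BCN-Sal} with the global-field bookkeeping of \cite{Pared-Sas} grafted on.
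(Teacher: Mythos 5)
Your proposed reduction has a genuine gap at its core: the fibration argument as you describe it does not work, because the hyperplane sections (or fibres of a linear projection) of an irreducible hypersurface $X$ need \emph{not} be irreducible curves. Corollary~\ref{cor:AK} and Theorem~\ref{thm:main} apply only to irreducible curves of degree $d$; a fibre of degree $\le d$ can split into components of much lower degree, and in particular a line contained in a fibre contributes on the order of $H^{d_K}$ points, so the total over $\sim H^{d_K(\dim X-1)}$ fibres becomes $H^{d_K\dim X}$ in the affine count --- exactly the bound one is trying to beat, and fatal for~(\ref{eq:dg:aff}) and hence for~(\ref{eq:dg}). Controlling the lines and conics lying on $X$ (bounding the dimension and degree of the family of lines, counting separately the points on that locus, and running an induction on dimension) is the entire content of the dimension growth machinery of Salberger, Browning--Heath-Brown, \cite{CCDN-dgc} and \cite{CDHNV-dgc}; it cannot be dismissed as bookkeeping. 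Your explanation of the exponent $2/\sqrt{3}$ in~(\ref{eq:dg3}) reflects the same misunderstanding: the bound for cubics does \emph{not} follow ``a fortiori'' from the $d=3$ case of the curve theorem with exponent $2/3$; the exponent $2/\sqrt{3}$ arises precisely from optimizing over the contribution of the lines contained in a cubic hypersurface, which your fibration ignores.

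You should also know that the paper takes a different (and for $d\ge 5$ much shorter) route. Theorem~\ref{thm:main:dg} is deduced from the affine statement Theorem~\ref{thm:main:dg:aff} by passing to the affine cone over $X$. For $d\ge 5$, one first reduces to the regime $\log H<d$ using the known dimension growth bounds of \cite{Pared-Sas} (which have a worse power of $d$ but no $\log$), and in that regime the factor $H^{1/d}$ is $O(1)$, so Theorem~\ref{thm:main:dg:aff} follows directly from Theorem~\ref{thm:main:Pila:d2K} --- i.e.\ from the higher-dimensional $p$-adic determinant method (Proposition~\ref{prop:g:Pila:K} together with Proposition~\ref{lem:alpha:highdim:1} for the high-multiplicity points and Schwartz--Zippel on the intersections $X\cap\{g_i=0\}$), with no fibration into curves at all. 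Only for $d=3$ and $d=4$ does the paper invoke the fibration-plus-induction argument of \cite{BCN-Sal} and \cite{CDHNV-dgc}, with Corollary~\ref{cor:AK} as the base case $n=3$, and it is exactly there that the lines and conics are handled and the exponent $2/\sqrt{3}$ emerges.
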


\begin{Thm}[Affine dimension growth]\label{thm:main:dg:aff}
  Let $K$ and $d_K$ be as above, and let $n>2$ be given. There are
  constants $c=c(K,n)$ and $\kappa=\kappa(n)$ such that for any
  $d\ge 4$, any irreducible hypersurface $X$ of degree $d$ given by
  $f=0$ in $\A^n_K$ such that the highest degree part of $f$ is
  geometrically irreducible and any $H>2$, one has
  \begin{equation}\label{eq:dg:aff}
    \# X(\cO_K,H) \le c d^{2}H^{\dim X-1}(\log H)^\kappa.
  \end{equation}
  where $X(\cO_K,H)$ is defined in (\ref{eq:cOK}) of
  Section~\ref{sec:prel} below.

  Furthermore, for any irreducible hypersurface $X$ of degree $3$
  given by $f=0$ in $\A^n_K$ such that the degree $3$ part of $f$ is
  geometrically irreducible, one has
  \begin{equation}\label{eq:dg3:aff}
    \# X(\cO_K,H) \le c H^{\dim X  -2 + 2/\sqrt{3}}(\log H)^\kappa.
  \end{equation}
\end{Thm}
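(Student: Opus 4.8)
The plan is to deduce Theorem~\ref{thm:main:dg:aff} from the curve bound of Theorem~\ref{thm:main} (together with its affine variant Corollary~\ref{cor:AK}), following the same fibration/slicing strategy that underlies the dimension growth literature \cite{Pared-Sas,Salberger-dgc,Vermeulen:p}, but now feeding in the sharpened quadratic-in-$d$ curve estimate. First I would set up the induction on $n = \dim X + 1$. The base case $n=3$ is essentially the affine curve statement: slicing the surface $X \subset \A^3_K$ by a pencil of hyperplanes $x_n = a$ (with $a$ ranging over $\cO_K$-points of height $\lesssim H$, so $\lesssim H^{d_K}$ values) reduces counting $X(\cO_K,H)$ to counting points on the affine plane curves $X \cap \{x_n=a\}$, each of degree $\le d$; one must separate the finitely many $a$ for which this slice is reducible or has a component of low degree (these contribute via a Bézout-type bound, or are handled by a change of coordinates ensuring the relevant leading form stays geometrically irreducible on a generic slice), and apply Corollary~\ref{cor:AK} to the remaining geometrically irreducible slices to get $\le c d^2 H^{1}(\log H)^\kappa$ per slice, hence $\le c d^2 H^{d_K}(\log H)^\kappa = c d^2 H^{\dim X -1}(\log H)^\kappa$ after multiplying by the number of slices. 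The degree $3$ clause is obtained identically but invoking the degree-$3$ refinement: each geometrically irreducible cubic plane curve slice carries $\le c H^{2/\sqrt 3}(\log H)^\kappa$ points — this $2/\sqrt3$ exponent is exactly what the determinant method yields for plane cubics — and summing over the $H^{d_K(\dim X - 2)}$ hyperplanes gives \eqref{eq:dg3:aff}.

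For the inductive step $n > 3$, I would again intersect $X = \{f = 0\}$ with hyperplanes $x_n = a$. The key point is to arrange that for all but $O_{n}(1)$ values of $a$, the hyperplane section $X_a := X \cap \{x_n = a\}$ is an irreducible hypersurface in $\A^{n-1}_K$ of degree $d$ whose top-degree part is geometrically irreducible, so that the induction hypothesis applies and yields $\# X_a(\cO_K,H) \le c d^2 H^{\dim X - 2}(\log H)^\kappa$; summing over $\le c H^{d_K}$ values of $a$ gives the claimed bound. That the generic hyperplane section of a hypersurface with geometrically irreducible leading form again has geometrically irreducible leading form, and that the section itself is (geometrically) irreducible for $n-1 \ge 3$, are standard Bertini-type facts — but they require a preliminary $K$-linear (or $\GL_n(\cO_K)$, to control heights up to a constant factor) change of coordinates putting $f$ in a position where $x_n$ appears with the right multiplicity and the leading form restricts well; I would cite the analogous lemmas from \cite{CDHNV-dgc} or \cite{Pared-Sas} verbatim, since the only new input is the improved per-slice curve count, everything else being formally identical to the known arguments.

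The main obstacle, and the only place real work is needed, is the \emph{uniformity of the constants across the induction} and the treatment of the exceptional slices. One has to check that the $\log$-exponent $\kappa$ can be taken independent of $d$ (it grows with $n$, which is allowed by the statement) and that the constant $c=c(K,n)$ absorbs: (i) the $O_n(1)$ bad hyperplane sections, on each of which one falls back to the affine Pila-type bound \eqref{eq:de1:d} / Theorem~\ref{thm:Pila:highdim} applied to components — these contribute $\lesssim d^{O_n(1)} H^{\dim X - 1}$, which is \emph{not} obviously $\le d^2 H^{\dim X - 1}$, so one must instead route the bad slices through the polynomial-dependence results \cite{CCDN-dgc,CDHNV-dgc,Pared-Sas,Vermeulen:p} only after a further degree reduction, or observe that finitely many bad slices times a bound with higher $d$-power is still dominated once $H$ is large relative to $d$ and otherwise handled by the trivial bound $\# X(\cO_K,H) \le (cH^{d_K})^n$ when $H \le d$; and (ii) the change-of-coordinates step, whose effect on heights is by a bounded (depending on $K,n$) factor. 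The cleanest route is probably to quote the fibration lemma of \cite{CDHNV-dgc} in the precise form that \emph{reduces affine dimension growth for $\dim X = m$ to affine dimension growth for plane curves plus a controlled error}, and then simply substitute our Corollary~\ref{cor:AK} in place of the weaker curve bound used there; I would structure the write-up that way, so that the proof is short and the novelty — the quadratic $d$-dependence propagating through the slicing — is transparent.
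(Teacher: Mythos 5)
Your plan for $d=3,4$ does match the paper: there the proof is by the fibration argument of \cite{BCN-Sal} and \cite{CDHNV-dgc}, with Corollary~\ref{cor:AK} replacing the weaker curve bound at the base case $n=3$. (One quibble: the exponent $2/\sqrt 3$ is not ``what the determinant method yields for plane cubics'' --- an irreducible affine plane cubic gives $H^{1/3}$ by Corollary~\ref{cor:AK}; the $2/\sqrt 3$ arises from balancing the contribution of the low-degree curves, e.g.\ lines and conics, contained in the cubic hypersurface.)

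For the main case $d\ge 5$, however, you take a genuinely different route from the paper, and it has a gap. The paper does \emph{no} slicing at all for $d\ge 5$: if $d\le \log H$ it quotes the known affine dimension growth \cite[Theorem~1.11]{Pared-Sas} (polynomial in $d$, and $d^{e}\le d^2(\log H)^{e-2}$ in that regime), and if $d>\log H$ then $H^{1/d}=O(1)$, so Theorem~\ref{thm:main:Pila:d2K} --- the higher-dimensional determinant-method result proved via Proposition~\ref{lem:alpha:highdim:1} and Schwartz--Zippel --- gives the bound directly. Your proposal never invokes Theorem~\ref{thm:main:Pila:d2K} and instead tries to propagate the curve bound up through an induction on $n$ by hyperplane sections. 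The concrete problem is your claim that all but $O_n(1)$ slices $X\cap\{x_n=a\}$ are irreducible with geometrically irreducible leading form: the number of exceptional members of a pencil grows with $d$ (typically like a power of $d$), and each exceptional slice costs a Schwartz--Zippel bound of order $dH^{\dim X-1}$, so the bad fibers alone contribute $d^{O_n(1)}H^{\dim X-1}$. You acknowledge this in your ``main obstacle'' paragraph, but the proposed fixes do not close it: absorbing $d^{O_n(1)-2}$ into $(\log H)^\kappa$ only works when $d\lesssim(\log H)^{O(1)}$, and the trivial bound only helps when $H\lesssim d$; the intermediate regime $(\log H)^{O(1)}\ll d\ll H$ is left uncovered. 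This is precisely why the paper proves the quadratic dependence in higher dimensions by running the determinant method directly on the hypersurface (with the high-multiplicity locus controlled by Proposition~\ref{lem:alpha:highdim:1}) rather than by fibering into curves; the known fibration arguments only yield higher powers of $d$.
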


In the above Theorems~\ref{thm:main:dg} and~\ref{thm:main:dg:aff} with
$d=4$ and $d=3$, we have removed $H^\e$ and replaced it with
$(\log H)^\kappa$, compared to \cite[Theorem 1.11]{Pared-Sas}. This
gives an answer to a question by Serre \cite[page 178]{Serre-Mordell}
for all $d\ge 4$, the case of degree $4$ being new for general $K$
(the case $K=\Q$ was obtained in \cite{BCN-Sal}). See Theorem 4.1 of \cite{CDHNV-dgc} for a relaxation of the conditions on $f$, compared to the conditions in Theorem~\ref{thm:main:dg:aff}. Also note that, if one allows a higher power of $d$ than quadratic, the factor $(\log H)^\kappa$ is not needed in Theorems \ref{thm:main:dg} and \ref{thm:main:dg:aff} when $d>4$, see Theorems 4.1 and 4.20 of \cite{CDHNV-dgc}, improving \cite{Pared-Sas}.

In Theorem~\ref{thm:main:dg:aff}, quadratic dependence on
$d$ is optimal by \cite[Section~6]{CCDN-dgc}.
For the projective situation of Theorem~\ref{thm:main:dg}, a lower bound which is linear in $d$ (instead of
quadratic) is obtained in \cite[Section~6]{CCDN-dgc}; this lower bound is improved to an almost quadratic bound in \cite{CGlaz}, namely up to a factor $d^\varepsilon$ when the dimension is allowed to be large.   







\subsection{Sketch of the proof}\label{sec:proof-sketch}

We sketch the main ideas in the proof of Theorem~\ref{thm:main} for
the basic case $K=\Q$, which go through with small technical
modifications in the general case. It is relatively easy to reduce to
the case $d>(\log H)^2$, because otherwise the result already follows
from known results (from \cite{me:c-cells} or \cite{CCDN-dgc}). Note that in this regime, the $H^{2/d}$ term
in~(\ref{eq:Sal:log}) is bounded by a constant, so our goal is to
prove a bound of order $d^2(\log H)^\kappa$. Also suppose for
simplicity that $C\subset\P^2_\Q$.

We choose a prime $p$ of size roughly $(\log H)^4$. Denote by
$C_p\subset\P^2_{\FF_p}$ the modulo $p$ reduction of $C$, fix
a point $P$ in $C_p(\FF_p)$ and write $\mu_P$ for the multiplicity of $P$ on $C_p$. We will treat each of
the points $P$ separately, as there are at most $p^2\sim (\log H)^8$
of them. Our goal is to use the $p$-adic interpolation determinant
method \cite{Heath-Brown-Ann,Salberger-dgc} to interpolate all
rational points in $C(\Q,H)$ reducing to $P$ in $C_p$ by an algebraic
curve of degree $d-1$. This would give us a bound of order $d^2$ for
the number of points lying over $P$ by B\'ezout. 
In the original  $p$-adic determinant method from \cite{Heath-Brown-Ann}, when one considers $d$ as fixed and
$H\gg1$, it is necessary to work with points $P$ satisfying
$\mu_P=1$. If one allows singular points as in \cite{Salberger-dgc}, the determinant method
generally only gives a bound of order $H^{2\mu_P/d}$ rather than the
requisite $H^{2/d}$. A key observation is that in our special regime
$d>(\log H)^2$ the condition $\mu_P=1$ can be significantly relaxed
--- in fact assuming just $\mu_P<d/\log H$ we already have
\begin{equation}
  H^{2\mu_P/d} \sim 1 \sim H^{2/d} .
\end{equation}
In other words, we can treat, with the standard $p$-adic determinant
method, all points $P\in C_p(\FF_p)$ except those having
$\mu_P\ge d/\log H$. We refer to these latter points as having ``high
multiplicity'', and to the others as having ``low multiplicity''. We
show in Proposition~\ref{lem:alpha} that all points of high multiplicity lie
in an algebraic curve in $\P^2_{\FF_p}$ of degree $N\log H$ for some
universal constant $N$; the proof of Proposition~\ref{lem:alpha} relies on some basic intersection theory.

We now repeat this argument with roughly $(\log H)^4$ different primes
$p_i$. Any point of $C(\Q,H)$ that has low multiplicity modulo one of
these primes is handled by the $p_i$-adic interpolation determinant
method as above. Finally, the remaining points lie on a curve of
degree $N\log H$ modulo each $p_i$. This means that a polynomial
interpolation determinant of degree $N\log H$ for these points must
vanish modulo each $p_i$. Comparing this with the height bound for the
determinant implies that the determinant vanishes identically, i.e.,
that all remaining points are interpolated by one additional algebraic
curve of degree at most $N\log H<d$. Another application of B\'ezout's theorem
concludes the proof of Theorem~\ref{thm:main}.

\section{High multiplicity points}

To prove Theorem~\ref{thm:main}, we will use the following Proposition
\ref{lem:alpha} for capturing the high multiplicity points in a low
degree auxiliary curve. A reader who wants to continue with the proof
of Theorem~\ref{thm:main} can skip directly to the next section, and
come back to the proof of Proposition~\ref{lem:alpha} later.

In fact, Proposition~\ref{lem:alpha} will be obtained as a special
case of Proposition~\ref{lem:alpha:highdim:1} (which in turn is a
special case of Proposition~\ref{lem:alpha:highdim}).  It is important
to us that these
propositions~\ref{lem:alpha},~\ref{lem:alpha:highdim:1},~\ref{lem:alpha:highdim}
work over any field $F$, of any characteristic.

We recall the definition of the multiplicity of a point on a
hypersurface here, and the more general definition of multiplicity
will be recalled just before Proposition~\ref{lem:alpha:highdim}.  If
$X$ is a hypersurface in a non-singular variety $Y$ and $P$ a
point in $Y(F)$, then the multiplicity $\mult_P(X)$ of $X$ at $P$ is
defined as follows.  Take local coordinates $x=(x_1,\ldots,x_n)$
around $P$ on $Y$ (in particular, with $P$ having coordinates $0$),
and let $X$ be locally defined by a regular function
$f=f(x_1,\ldots,x_n)=0$.  Write
$$
f(x)=\sum_{k\geq 0}f_k(x),
$$
where $f_k(x)$ is a homogenous polynomial of $x$ of degree $k$, and
where the equality holds in the completed local ring
$\hat \cO_{Y,P}\simeq F[[x]]$.  Then $\mult_P(X)$ is equal to the
smallest $k$ with $f_k\neq 0$.

\begin{Prop}\label{lem:alpha}
  Let $F$ be a field and let $f$ be a homogeneous polynomial in
  $F[x_0,x_1,x_2]$ 
  of degree $D>0$. Let $\Gamma\subset \P^2_F$ be the corresponding
  (potentially non-reduced and reducible) curve of degree $D$. Then,
  for any real number $k\ge 1$, the points in $\Gamma(F)$ with
  multiplicity at least $D/k$ lie in a curve
  of degree no more than $Nk$ for some universal constant $N$.
\end{Prop}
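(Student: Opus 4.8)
The plan is to bound the number of high-multiplicity points and then invoke a counting/interpolation argument: if a curve $\Gamma$ of degree $D$ has "too many" points of multiplicity $\ge D/k$, these points must be forced to lie on a curve of controlled degree. The cleanest route is via intersection theory on $\P^2$. Let $S$ be the set of points of $\Gamma(F)$ with $\mult_P(\Gamma)\ge D/k$. If two plane curves of degrees $D$ and $e$ share a point $P$ of multiplicities $\mu$ and $\nu$ respectively, then by B\'ezout their intersection multiplicity at $P$ is at least $\mu\nu$, so $\sum_{P} \mu_P(\Gamma)\,\nu_P(\Gamma') \le De$ whenever $\Gamma,\Gamma'$ have no common component. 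The strategy is therefore: choose $e \approx Nk$ and show that the space of homogeneous polynomials of degree $e$ vanishing to order $\ge 1$ at every $P\in S$ (i.e. passing through all of $S$) is nonzero, exhibiting an auxiliary curve $\Gamma'$ of degree $e$ through all of $S$.

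First I would handle the trivial case: if $|S|$ is small — say $|S| \le \binom{Nk+2}{2}$ — then imposing one linear condition per point still leaves a nonzero solution space among degree-$(Nk)$ forms (a curve of degree $Nk$ can pass through any $\binom{Nk+2}{2}-1$ points), so we are done directly, for $N$ a small absolute constant. So the real content is bounding $|S|$. For this, pick a generic line $\ell$, or better, pass to the following counting bound: each point $P\in S$ contributes at least $(D/k)^2$ to the self-intersection-type quantity, but one must be careful since $\Gamma$ may be non-reduced or have components through $P$. The robust move is: if $\Gamma = \bigcup \Gamma_i^{m_i}$, then at a point $P$ of multiplicity $\mu_P$ we have $\mu_P = \sum m_i \cdot \mathrm{mult}_P(\Gamma_i)$, and a single reduced irreducible component $\Gamma_i$ of degree $d_i$ has $\sum_{P} \mathrm{mult}_P(\Gamma_i)(\mathrm{mult}_P(\Gamma_i)-1) \le d_i(d_i-1)$ (the genus/arithmetic-genus bound for plane curves). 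Summing appropriately, $\sum_{P\in\Gamma} \mu_P(\mu_P - 1) \le D(D-1)$, hence $\sum_{P\in S}\mu_P^2 \le D^2 + \sum \mu_P \le D^2 + D\cdot(\text{something})$; since each $\mu_P \ge D/k$, this gives $|S|\cdot (D/k)^2 \lesssim D^2$, i.e. $|S| \lesssim k^2$. Then $|S| \le \binom{Nk+2}{2}$ for a suitable absolute $N$, and the interpolation argument of the previous paragraph produces the curve of degree $\le Nk$.

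The main obstacle is making the inequality $\sum_{P\in\Gamma}\mu_P(\mu_P-1)\le D(D-1)$ (or a variant with an absolute-constant factor) rigorous over an arbitrary field $F$ of arbitrary characteristic, for curves that may be reducible and non-reduced. Over $\overline{F}$ one can argue component-by-component using the standard plane-curve inequality $\sum_P \binom{\mathrm{mult}_P}{2} \le \binom{d-1}{2}$ for an irreducible curve, together with the fact that distinct components $\Gamma_i,\Gamma_j$ meet in at most $d_id_j$ points (B\'ezout), contributing cross terms to $\mu_P^2$; the bookkeeping $\mu_P = \sum m_i\,\mathrm{mult}_P(\Gamma_i)$ then yields $\sum_P \mu_P^2 \le (\sum m_i d_i)^2 = D^2$ up to lower-order terms. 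Points in $\Gamma(F)$ are in particular points of $\Gamma(\overline{F})$, so the bound on $|S|$ passes down. (If the excerpt's later Propositions~\ref{lem:alpha:highdim:1} and~\ref{lem:alpha:highdim} give a cleaner higher-dimensional packaging, I would instead deduce Proposition~\ref{lem:alpha} as the stated special case; but the self-contained plane argument above is the core idea.) Once $|S|\lesssim k^2$ is established, the rest is the elementary dimension count for interpolating forms and requires no further input.
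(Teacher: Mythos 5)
Your strategy---show the set $S$ of high-multiplicity points is finite of size $O(k^2)$, then interpolate by dimension count---breaks down precisely at the point you flag as "the main obstacle," and the non-reduced case is not a bookkeeping issue but a genuine counterexample to your key inequality. Take $\Gamma = D\cdot L$ for a line $L$ (i.e.\ $f = \ell^D$). Then \emph{every} point of $L(F)$ has multiplicity $D \ge D/k$, so $S$ is infinite and $\sum_P \mu_P(\mu_P-1) \le D(D-1)$ is false. More generally, whenever some irreducible component $C_j$ appears with coefficient $n_j \ge D/k$, all of its smooth $F$-points lie in $S$: in the decomposition $\mu_P=\sum_i m_i\,\mathrm{mult}_P(\Gamma_i)$ the contribution $m_j^2$ at a smooth point of $\Gamma_j$ does not vanish, while the component-wise genus bound $\sum_P\binom{\mathrm{mult}_P \Gamma_j}{2}\le\binom{d_j-1}{2}$ only sees singular points, and the B\'ezout cross terms only see points on two distinct components. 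So no variant of $\sum_P\mu_P^2\lesssim D^2$ can hold, and the finiteness-plus-interpolation route cannot be repaired without an extra idea.

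The missing idea, which is how the paper proceeds, is to treat separately the components appearing with large coefficient: if $\Gamma=\sum_j n_j C_j$, the union of those $C_j$ with $n_j > D/(2k)$ has total degree less than $2k$ (since $\sum_j n_j\deg C_j = D$), and you simply absorb it into the auxiliary curve. For the remaining points a multiplicity $\ge D/k$ must come either from being a singular point of a component contributing more than $D/(2k)$, or from lying on many components; the paper quantifies both via the intersection cycle $A=\sum_j n_jC_j\cdot(n_jC_j'+\sum_{\ell\ne j}n_\ell C_\ell)$, where $C_j'$ is a generic polar curve (so $\mathrm{mult}_P(C_j\cdot C_j')\ge \mathrm{mult}_P(C_j)(\mathrm{mult}_P(C_j)-1)$, which is the genus-type bound you invoke, and the $C_j\cdot C_\ell$ terms are your cross terms). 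Bézout gives $\deg A< D^2$ while each surviving high-multiplicity point contributes $> D^2/8k^2$ to $\mathrm{mult}_P A$, yielding $O(k^2)$ points; your final interpolation step is then fine. Your descent from $\overline F$ to $F$ via the $F$-rationality of the linear conditions is also correct, and for \emph{reduced} $\Gamma$ your counting is essentially the paper's argument. But as written the proof does not cover non-reduced $\Gamma$, which the statement explicitly includes.
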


For the proofs of our higher dimensional results like
Theorems~\ref{thm:Pila:highdim;d2},~\ref{thm:main:dg},~\ref{thm:main:dg:aff},
we will use the following generalization of
Proposition~\ref{lem:alpha} to capture the high multiplicity points
on a general hypersurface by a low degree hypersurface.

\begin{Prop}\label{lem:alpha:highdim:1}
  Let $F$ be any field. Let $k\ge 1$ and $n\ge
  2$ 
  be given. Consider a homogeneous polynomial $f$ in $n+1$ variables
  over $F$, of degree $D$. Then the set of points $P $ in $\P^n_F(F)$ on $f=0$ where
  $\mult_P (f) > D/k$ lies in a hypersurface of degree no more than
  $ck^{2^{n-2}}$ for some constant $c=c(n)$ depending only on
  $n$. Here, $\mult_P (f)$ stands for the order of vanishing of $P$ on
  $f$.
\end{Prop}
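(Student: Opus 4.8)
My plan is to prove the statement by induction on $n$, the base case $n=2$ being the planar Proposition~\ref{lem:alpha}, whose proof --- B\'ezout's theorem counted with multiplicities, the ``basic intersection theory'' alluded to after its statement --- already contains the decisive idea. One may freely pass to an algebraic closure of $F$ (the locus in question is defined over $F$, the target degree bound is insensitive to field extension, and the linear system of hypersurfaces of given degree through a fixed $F$-subscheme has an $F$-point once it is nonempty geometrically). Write $m=\lceil D/k\rceil$, so that $\mult_P(f)>D/k$ forces $\mult_P(f)\ge m$, and recall that in every characteristic the set $Z$ of points with $\mult_P(f)\ge m$ (which contains the locus in question) is the common zero locus of the Hasse (divided-power) derivatives of $f$ of order $\le m-1$; thus $Z$ is closed, and one may assume $D\ge c_nk$ for a suitable $c_n$, since otherwise $Z\subseteq\{f=0\}$ already has degree $D<c_nk\le ck^{2^{n-2}}$.

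The technical core is an intersection-theoretic bound on the irreducible components of $Z$. The point, already present for $n=2$, is that a Hasse derivative $g$ of $f$ of \emph{intermediate} order $e\approx m/2$ has $\deg g\le D$, yet vanishes to order $\ge m-e\gtrsim D/(2k)$ at every point of $Z$, hence along every component of $Z$. Selecting two such derivatives $g_1,g_2$ of order $\le e$ with no common factor (which the degenerate cases will have to excuse --- see below) gives $Z\subseteq V(g_1,g_2)$, a subscheme of pure codimension $2$, and refined B\'ezout (with local intersection multiplicities $\ge(m-e)^2$ along each component of $Z$) bounds the total degree of the codimension-$2$ components of $Z$ by $\deg g_1\deg g_2/(m-e)^2\le D^2/(m-e)^2=O(k^2)$; running the same argument with $r$ suitably chosen derivatives controls the codimension-$r$ components as well. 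The components of $Z$ of higher codimension --- in particular its isolated points, which the codimension-$r$ count does not see --- are handled by dropping one dimension: for a general hyperplane $H\cong\P^{n-1}$, the intersection $Z\cap H$ lies in the high-multiplicity locus of $f|_H$, a hypersurface of degree $D$ in $\P^{n-1}$, which by the inductive hypothesis is contained in a hypersurface of degree $\le c_{n-1}k^{2^{n-3}}$; feeding this back together with the derivative estimates along the generic slice pins down those components, and it is precisely this feedback --- a B\'ezout product of a quantity of size about $k^{2^{n-3}}$ with another of the same size --- that roughly doubles the exponent, $2^{n-3}\to 2^{n-2}$.

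It then remains to assemble the pieces: the codimension-$1$ part of $Z$ has total degree $<k$; the components of codimension $\ge 2$ have total degree $O(k^{2^{n-2}})$ by the above; and, since any proper closed subvariety of $\P^n$ of degree $\delta$ lies on a hypersurface of degree $O(\delta)$, the whole of $Z$ lies on a hypersurface of degree $ck^{2^{n-2}}$ with $c=c(n)$, as wanted.

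The step I expect to be genuinely delicate --- and which forces the whole argument to be run through B\'ezout rather than by a naive slicing induction --- is the control of the small, especially $0$-dimensional, components of $Z$: a bound that holds on a \emph{generic} hyperplane section need not globalize to $\P^n$ (there are space curves all of whose general plane sections lie on a conic even though the curve lies on no quadric), so these components must be pinned down directly from the derivative estimates. This is also where the bookkeeping is heaviest: the degenerate configurations --- in which $f$ is too special for its low-order Hasse derivatives to cut $Z$ out in the expected codimension (e.g.\ when $f$ carries a repeated factor) --- must be split off and reduced, by downward induction on $\deg f$, to cases already treated, and in positive characteristic one must moreover ensure that the Hasse derivatives one works with are the nonvanishing ones.
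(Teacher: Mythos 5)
Your overall strategy --- derivatives of $f$ vanish to high order wherever $f$ does, so the high-multiplicity locus sits inside a proper intersection whose degree B\'ezout controls once local intersection multiplicities of size $(D/k)^2$ are taken into account, with the exponent of $k$ doubling once per dimension --- is indeed the engine of the paper's argument (Proposition~\ref{lem:alpha:highdim} together with Lemma~\ref{lem:multipicity3}). But the proposal has two genuine gaps, both sitting exactly where you yourself flag discomfort. First, the existence of two intermediate-order Hasse derivatives $g_1,g_2$ with no common factor fails precisely in the case that produces the answer's shape: if $f=\prod_j f_j^{n_j}$ has a factor with $n_j>D/k$, then every Hasse derivative of order $e<n_j$ is divisible by $f_j^{n_j-e}$, the locus $Z$ has a codimension-one component, and no choice of derivatives cuts it out in codimension two. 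Your ``downward induction on $\deg f$'' for these configurations is not specified, yet they are not a degenerate side case --- they are the source of the exceptional low-degree hypersurface. Second, the inductive step is not closed: you correctly observe that containment of $Z\cap H$ in a low-degree hypersurface of a \emph{generic} hyperplane $H$ does not globalize to $\P^n$, but then assert without any mechanism that the isolated and higher-codimension components can be ``pinned down directly from the derivative estimates''; that assertion is the entire difficulty, and the claimed doubling $2^{n-3}\to2^{n-2}$ rests on an unspecified ``B\'ezout product''.

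The paper avoids both problems simultaneously by never stratifying $Z$ by codimension. It writes the hypersurface as an effective cycle $\sum_j n_jC_j$ with $C_j$ irreducible, splits off the components with $n_j>D/(2k)$ (their union is a hypersurface of degree $<2k$, which disposes of your first gap), takes one generic derivative $C_j'$ of each irreducible $f_j$ (nonzero even in characteristic $p$ because $f_j$ is irreducible), and uses $\mult_P(X\cdot Y)\ge\mult_P(X)\,\mult_P(Y)$ to show that every remaining high-multiplicity point has multiplicity $>D^2/8k^2$ on the \emph{pure} $(m-1)$-dimensional intersection cycle $A=\sum_j n_jC_j\cdot(n_jC_j'+\sum_{\ell\ne j}n_\ell C_\ell)$, whose degree is $<D^2$. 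The induction is then on the dimension of the cycle with $(D,k)\mapsto(D^2,8k^2)$ --- this is where the exponent doubles --- and since $A$ is pure-dimensional by construction there are never stray components to globalize. To make your argument complete you would essentially have to rebuild it around this cycle-theoretic induction rather than around coprime pairs of Hasse derivatives and hyperplane slicing.
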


In fact, Proposition~\ref{lem:alpha:highdim:1} is a direct corollary
of Proposition~\ref{lem:alpha:highdim}, for effective cycles of any
codimension instead of hypersurfaces, that we will prove by induction
on the dimension. Let us first recall the general definition of
multiplicity (a special case of which was already recalled above).

Let $X$ be a variety over a field $F$, and $P\in X$ a closed point.
Consider the local ring $\OO_{X,P}$ at $P$ and the maximal ideal
$\mm\subset\OO_{X,P}$.
Consider the graded ring
$$
A=\bigoplus_{i=0}^{\infty}\mm^i/\mm^{i+1}.
$$
If $P_A(\mm)$ is the Hilbert polynomial of the graded ring $A$, then
$$
\mult_P(X):=(\dim X-1)!\cdot(\textrm{the leading coefficient of
  $P_A(\mm)$})
$$
is the {\em multiplicity of $X$ at $P$}.

Recall that an effective cycle in a non-singular variety $W$ is a
linear combination of irreducible varieties in $W$ with non-negative
integer coefficients.  One can extend the definition of multiplicities
to an effective cycle $X=\sum_i a_iX_i$ in a non-singular variety by
linearity, that is, by
$$
\mult_P(X) :=\sum_i a_i \mult_P(X_i),
$$
where $\mult_P(X_i)$ is defined as zero when $P$ does not lie on
$X_i$.  See for example \cite[Chap.\ 12]{Eisenbud} or
\cite{hartshorne} for more detail about multiplicities and Hilbert
polynomials.  Recall that the degree an effective cycle
$X=\sum_i a_iX_i$ in $\PP^n$ is defined by linearity, that is, by
$$
\deg (X) :=\sum_i a_i \deg (X_i),
$$
where $\deg (X_i)$ is the degree of $X_i$.

\begin{Prop}\label{lem:alpha:highdim}
  Let $F$ be any field. Let $k\ge 1$, $n\ge 2$ and $m\ge 1$ with
  $m<n$ 
  be given.  Let $\Gamma$ be an effective cycle in $\P_F^n$ of pure
  dimension $m$ and of degree $D>1$. Then the set of points $P $ on
  $\Gamma(F)$ where $\mult_P \Gamma > D/k$ lies in a hypersurface of
  degree no more than $ck^{2^{m-1}}$ for some constant $c=c(n)$
  depending only on
  $n$. 
\end{Prop}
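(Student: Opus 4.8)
The plan is to reduce everything to a degree bound on the irreducible components of the ``bad locus'' $Z:=\{P\in\G(F):\mult_P\G>D/k\}$, stratified by dimension, followed by a dimension count. Write $\mu:=\lfloor D/k\rfloor+1$, so $Z=\{P:\mult_P\G\ge\mu\}$. If $k>D$ then $\mu=1$, $Z=\G(F)$, and since $\dim\G=m<n$ the cone construction (project $\G$ from a generic linear centre onto $\P^{m+1}_F$ and take the preimage of the image) puts $Z$ inside a hypersurface of degree $\le\deg\G=D<k\le ck^{2^{m-1}}$; so assume $k\le D$, i.e.\ $\mu\ge2$. If a component $\G_i$ enters $\G$ with coefficient $a_i\ge\mu$ then $\deg\G_i<D/\mu<k$, there are fewer than $k$ such components, and each lies (again by the cone construction) in a hypersurface of degree $<k$; absorbing these into $c(n)$ we may assume no component of $\G$ has coefficient $\ge\mu$, whence $\dim Z\le m-1$.

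For each $d$ with $0\le d\le m-1$ I would bound the degree of the $d$-dimensional part $Z_{(d)}$ of $\overline Z$. Cut $\G$ with a generic linear subspace $L$ of dimension $n-d$: then $\G\cap L$ is pure of dimension $m-d$ and of degree $D$, and $L$ meets $Z_{(d)}$ in exactly $\deg Z_{(d)}$ points. For a generic $L$, at each such point $P$ the Hilbert--Samuel multiplicity of $\G$ is unchanged by the slicing (a generic linear section through a point is cut out by superficial elements), so $\mult_P(\G\cap L)\ge\mu$, and a dimension count shows that $P$ avoids both the positive-dimensional strata of the multiplicity-$\ge\mu$ locus of $\G\cap L$ and the locus where slicing raised the multiplicity; hence $P$ is an isolated point of multiplicity $\ge\mu$ on $\G\cap L$. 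Thus $\deg Z_{(d)}$ is at most the number of isolated points of multiplicity $\ge\mu$ on the $(m-d)$-dimensional degree-$D$ cycle $\G\cap L$. To bound that number I would project $\G\cap L$ generically to $\P^{m-d+1}_F$; a generic projection is birational onto its image, a hypersurface of degree $\le D$, and at the finitely many isolated high-multiplicity points — whose finitely many secant and tangent lines the centre avoids — it does not lower the multiplicity. B\'ezout applied to $m-d+1$ of the first-order Hasse derivatives of a defining equation (Hasse derivatives, so that the argument is characteristic-free, matching the remark that Propositions~\ref{lem:alpha},~\ref{lem:alpha:highdim:1},~\ref{lem:alpha:highdim} hold over any $F$) then bounds the isolated multiplicity-$\ge\mu$ points of a degree-$D$ hypersurface in $\P^{m-d+1}_F$ by $O\!\bigl((D-1)^{m-d+1}/(\mu-1)^{m-d+1}\bigr)=O(k^{m-d+1})$. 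For $d=0$ and $m=1$ this is precisely the intersection-theoretic count behind Proposition~\ref{lem:alpha}.

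Finally, since $\G$ spans no hyperplane and $n\ge m+1$ we have $n-d\ge m-d+1$, so the standard fact that a dimension-$d$, degree-$\delta$ variety in $\P^n_F$ lies in a hypersurface of degree $O_n(\delta^{1/(n-d)})$ places $Z_{(d)}$ in a hypersurface of degree $O_n(k^{(m-d+1)/(n-d)})\le O_n(k)$; the product of these for $d=0,\dots,m-1$, times the finitely many peeled-off components, is a hypersurface of degree $O_n(k)$ through $Z$ — in particular one of degree $\le c(n)k^{2^{m-1}}$ (so if the projection step goes through as expected one even gets the cleaner exponent $1$; the weaker $2^{m-1}$ would be what survives any loss there). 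The step I expect to demand the most care is exactly this interplay between multiplicity and generic linear projection in positive characteristic: proving that the generic projection to $\P^{m-d+1}$ does not decrease multiplicity at the isolated high-multiplicity points and that those points stay isolated in the multiplicity locus downstairs, so that the B\'ezout count is legitimate; the non-reduced bookkeeping and the propagation of the constants $c(n)$ are routine by comparison.
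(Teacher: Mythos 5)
The central gap is the B\'ezout count that everything else funnels into. You bound the number of isolated multiplicity-$\ge\mu$ points of a degree-$D$ hypersurface in $\P^{m-d+1}$ by $O\bigl(((D-1)/(\mu-1))^{m-d+1}\bigr)$ via ``B\'ezout applied to $m-d+1$ of the first-order Hasse derivatives.'' But the common zero locus of any linear combinations of the first partials contains the entire non-smooth locus of the hypersurface, which is in general positive-dimensional; an isolated point of the multiplicity-$\ge\mu$ locus will typically sit on a positive-dimensional component of the multiplicity-$\ge 2$ locus, hence is \emph{not} an isolated (distinguished) component of the intersection of the derivative hypersurfaces, and neither classical nor refined B\'ezout assigns it a local contribution of $(\mu-1)^{m-d+1}$ --- or any contribution at all. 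Moreover, first-order Hasse derivatives are just the ordinary partials, so they do not repair the characteristic-$p$ and non-reducedness issues for the defining form $\prod_i g_i^{a_i}$ of your projected cycle. This is exactly the difficulty the paper's proof is built around: it intersects each irreducible component $C_j$ with a \emph{single} generic polar $C_j'$ (which does meet $C_j$ properly, and whose defining form is nonzero over $\overline F$ because an irreducible $f_j$ is not a $p$-th power), obtains from Lemma~\ref{lem:multipicity3} a new effective cycle $A$ of one lower dimension with $\deg A<D^2$ and $\mult_P A\gtrsim (D/k)^2$ at the bad points, and recurses; the squaring of the parameters at each step is what produces the exponent $2^{m-1}$. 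That your route would yield a bound linear in $k$ for every $m$ should itself be a warning sign.

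Even granting a correct isolated-point count, the reduction to it is not secured. You need the points of $Z_{(d)}\cap L$ to be isolated in the multiplicity-$\ge\mu$ locus of $\Gamma\cap L$, and then their images to remain isolated in that locus for the projected hypersurface. Both generic slicing and generic projection can \emph{raise} multiplicities of nearby points: over a generic $L$ the locus where the slice's multiplicity exceeds that of $\Gamma$ can still have dimension up to $m-d-1$, and a generic projection creates a double-point locus whose multiplicity is controlled only at its generic points; either can produce a positive-dimensional multiplicity-$\ge\mu$ locus through the very points you want to count as isolated. The ``dimension count'' you invoke is not supplied and would require a further incidence argument over pairs of points; you flag this yourself, and it is where the argument stops being a proof. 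The outer frame --- peeling off components with coefficient $>D/(2k)$ (a hypersurface of degree $\le 2k$, as in the paper), the reduction to algebraically closed $F$ by interpolation, and placing a low-dimensional, low-degree locus in a hypersurface by comparing monomial counts with the Hilbert function --- is sound and consistent with the paper's proof.
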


Note that the constants $N$ and $c$ from Propositions \ref{lem:alpha},  \ref{lem:alpha:highdim:1}  and \ref{lem:alpha:highdim} can in principle be made explicit from our proofs, but we will not need this.

Let us first recall some basics about multiplicities and intersection
products.

We will use the following lemma in the proof of Proposition
\ref{lem:alpha:highdim}.
\begin{Lem}\label{lem:multipicity3}
  Let $F$ be any field.  Let $X=\sum_ia_iX_i$ and $Y=\sum_jb_jY_j$ be
  pure dimensional effective cycles of $\PP^n_F$, and $P$ a closed
  point in $X\cap Y$.  Suppose that $X$ and $Y$ intersect properly,
  i.e., for any $i,j$ and any irreducible component $Z$ of
  $X_i\cap Y_j$, we have $\codim(Z)=\codim(X_i)+\codim(Y_j)$.  Then we
  have
$$
\mult_P(X\cdot Y)\geq \mult_P(X)\mult_P(Y),
$$
where $X\cdot Y$ is the intersection product on the level of cycles,
which is well-defined since $X$ and $Y$ intersect properly.
\end{Lem}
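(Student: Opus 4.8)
The plan is to reduce, via the classical reduction to the diagonal, to the case of intersecting a cycle with a hypersurface, then to handle that case by a Hilbert--Samuel length estimate; the factor $\mult_P(X)\mult_P(Y)$ appears at the end because the tangent cone of a product is the product of the tangent cones. Two preliminary reductions are harmless: both $\mult_P(X\cdot Y)$ and $\mult_P(X)\mult_P(Y)$ are bi-additive in $(X,Y)$ with non-negative coefficients $a_i,b_j$, so it suffices to treat $X,Y$ irreducible; and since multiplicities of cycles at a point and proper intersection products are local and invariant under base field extension (flat base change preserves lengths and Hilbert--Samuel functions), one may assume $P$ is $F$-rational.

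For the reduction to the diagonal, let $i\colon\P^n_F\hookrightarrow\P^n_F\times\P^n_F$ be the diagonal embedding with image $\Delta$; then $i_*(X\cdot Y)=(X\times Y)\cdot\Delta$, so $\mult_P(X\cdot Y)=\mult_{(P,P)}\bigl((X\times Y)\cdot\Delta\bigr)$. The embedding $\Delta\hookrightarrow\P^n_F\times\P^n_F$ is regular of codimension $n$; working étale- or formally-locally near $(P,P)$, with $\P^n_F$ modelled on $\A^n_F$ with coordinates centred at $P$, one may cut out $\Delta$ by the regular sequence $h_\ell=z'_\ell-z''_\ell$ ($\ell=1,\dots,n$) of differences of the coordinates on the two factors, each a nonzero linear form, so $\ord_{(P,P)}(h_\ell)=1$. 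The point to check is that every partial intersection $(X\times Y)\cap V(h_1,\dots,h_\ell)$ is proper near $(P,P)$: a component $Z$ through $(P,P)$ satisfies $\dim Z\ge\dim(X\times Y)-\ell$ by Krull, while cutting $Z$ by the remaining $h_{\ell+1},\dots,h_n$ lands it in $(X\times Y)\cap\Delta=X\cap Y$, of dimension $\dim X+\dim Y-n$ by the properness hypothesis, forcing the reverse inequality. Hence $(X\times Y)\cdot\Delta$ is computed near $(P,P)$ as the iterated proper divisor intersection with $V(h_1),\dots,V(h_n)$.

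It remains to prove, for an irreducible subvariety $Z$ of a smooth variety, a hypersurface $D=V(h)$ meeting $Z$ properly, and a closed point $Q\in Z\cap D$, the estimate
\[
  \mult_Q(Z\cdot D)\ \ge\ \ord_Q(h)\cdot\mult_Q(Z).
\]
Here I would take $R=\OO_{Z,Q}$, a local domain of dimension $d=\dim Z$ with maximal ideal $\mm$, and put $\nu=\ord_Q(h)$, so $h\in\mm^{\nu}$ is a non-zerodivisor. The associativity formula for Hilbert--Samuel multiplicities, applied to $R/hR$, identifies $\mult_Q(Z\cdot D)$ with $e(\mm;R/hR)$ (the height-one primes minimal over $hR$ are the local branches of $Z\cdot D$ through $Q$, with the corresponding lengths $\operatorname{length}(R_{\mathfrak p}/hR_{\mathfrak p})$ equal to the intersection multiplicities). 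Since $h\mm^{m+1-\nu}\subseteq hR\cap\mm^{m+1}$ and $hR\cong R$, one has $\operatorname{length}\bigl(hR/(hR\cap\mm^{m+1})\bigr)\le\operatorname{length}(R/\mm^{m+1-\nu})$, whence
\[
  \operatorname{length}\bigl(R/(hR+\mm^{m+1})\bigr)\ \ge\ \operatorname{length}(R/\mm^{m+1})-\operatorname{length}(R/\mm^{m+1-\nu}),
\]
and comparing leading terms of these (eventually polynomial) functions of $m$ gives $e(\mm;R/hR)\ge\nu\cdot e(\mm;R)$. Applying this to the $n$ successive divisor intersections above, each with order $1$, yields $\mult_{(P,P)}\bigl((X\times Y)\cdot\Delta\bigr)\ge\mult_{(P,P)}(X\times Y)$; and since the tangent cone of $X\times Y$ at $(P,P)$ is the product of the tangent cones of $X$ at $P$ and $Y$ at $P$ --- its associated graded ring being $\operatorname{gr}_{\mm}\OO_{X,P}\otimes_F\operatorname{gr}_{\mm}\OO_{Y,P}$, whose projectivization has degree equal to the product of the degrees by a Hilbert-series computation --- one has $\mult_{(P,P)}(X\times Y)=\mult_P(X)\mult_P(Y)$, completing the argument.

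The main obstacle I anticipate is the diagonal reduction: one has to justify that the proper intersection with the regularly embedded diagonal is genuinely computed by the successive divisor intersections (compatibility of the refined intersection product with cutting by a regular sequence, valid exactly because every intermediate stage is a proper intersection), and one must check that passing to an étale or formal neighbourhood of $(P,P)$ does not alter any of the multiplicities involved. After that, the Hilbert--Samuel comparison and the product-of-tangent-cones identity are standard.
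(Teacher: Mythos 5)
Your argument is correct, but it takes a genuinely different route from the paper. The paper first characterizes $\mult_P$ of a cycle as the intersection number with a \emph{generic} linear space $L$ of complementary dimension through $P$ (its Lemma~4, via \cite[Cor.~12.4]{Fulton} and the fact that the tangent cone has the right dimension), and then deduces the inequality purely formally from associativity/commutativity of the product together with the refined-B\'ezout inequality $i(P,A\cdot B;\PP^n)\ge \mult_P(A)\mult_P(B)$ for complementary-dimension proper intersections, again quoted from \cite[Cor.~12.4]{Fulton}. You instead reprove that input from scratch: reduction to the diagonal, iterated proper divisor intersections, the Hilbert--Samuel estimate $e(\mm;R/hR)\ge\ord(h)\,e(\mm;R)$ (your length computation and the leading-term comparison are correct, as is the use of the associativity formula to identify $e(\mm;R/hR)$ with $\mult_Q(Z\cdot D)$), and multiplicativity of multiplicity under products via $\operatorname{gr}_{\mm}\OO_{X,P}\otimes_F\operatorname{gr}_{\mm}\OO_{Y,P}$. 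What your approach buys is self-containedness at the level of commutative algebra and a proof that works verbatim for the cycle-level product; what it costs is exactly the point you flag, namely the cycle-level (not merely up to rational equivalence) identity $\Delta^![X\times Y]=D_n\cdot(\cdots(D_1\cdot[X\times Y]))$ when every partial intersection is proper --- this follows from functoriality of Gysin maps for the chain of regular embeddings $V(h_1,\dots,h_\ell)$ together with the fact that each step is then the proper divisor intersection, but it should be cited or proved, since it is doing the same work that \cite[Cor.~12.4]{Fulton} does in the paper's proof. Two minor points to tighten: your reduction to $F$-rational $P$ by base field extension needs the same (brief) justification the paper gives for passing to $\overline F$, since $P$ may split and the residue field extension need not be separable; and you should note that $\ord_Q(h)$ computed in $\OO_{Z,Q}$ is at least $\mult_Q(D)$ because $\mm_{W,Q}$ surjects onto $\mm_{Z,Q}$, which is what makes the per-divisor estimate usable.
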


Note that the intersection product $X\cdot Y$ in Lemma
\ref{lem:multipicity3} is a linear combination of the irreducible
components of the $X_i\cap Y_j$.  For the sake of completeness, we
provide a proof for Lemma~\ref{lem:multipicity3}, based
on 
Lemma~\ref{lem:multiplicity2}.  For pure dimensional cycles $X,Y$ of a
non-singular variety $W$ which intersect properly, and an irreducible
component $Z$ of $X\times_W Y$, we denote by
$$
i(Z,X\cdot Y; W)
$$
the intersection multiplicity of $X$ and $Y$ along $Z$, see
\cite[Chapter 7]{Fulton}.
\begin{Lem}\label{lem:multiplicity2}
  Assume that $F$ is an algebraically closed field.  Let $X$ be a pure
  dimensional effective cycle of $\PP^n_F$, and $P\in X$ a closed point.  Then for a
  general linear subspace $L$ of $\PP^n_F$ such that
  $\dim(L)=\codim(X)$ and $P\in L$, we have
$$
\mult_P(X)=i(P,X\cdot L;\PP^n_F).
$$
\end{Lem}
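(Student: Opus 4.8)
The statement identifies the multiplicity $\mult_P(X)$ of a pure-dimensional cycle $X$ at a closed point $P$ with the intersection multiplicity $i(P, X\cdot L; \PP^n_F)$ of $X$ with a general linear subspace $L$ of complementary dimension through $P$. The plan is to reduce everything to the local picture at $P$ and invoke the standard fact from intersection theory that a sufficiently general linear section realizes the Hilbert–Samuel multiplicity.

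First I would observe that both sides are additive over the components $X_i$ of $X = \sum_i a_i X_i$ (the multiplicity by the very definition recalled just above, and the intersection multiplicity $i(P, X\cdot L; \PP^n_F)$ by linearity of the intersection product and of $i(-,-;-)$ in its cycle argument), so it suffices to treat the case where $X$ is a single irreducible subvariety of dimension $m = n - \codim(X)$ passing through $P$. Next, since $P$ is a closed point and $F$ is algebraically closed, $P$ is an $F$-rational point; I would pass to the affine chart $\AA^n \subset \PP^n$ containing $P$ and to the local ring $\OO_{\PP^n,P} = \OO_{\AA^n,P}$, a regular local ring of dimension $n$ with maximal ideal $\mm_P$. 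Let $\fp \subset \OO_{\AA^n, P}$ be the prime defining $X$ locally, so $R := \OO_{X,P} = \OO_{\AA^n,P}/\fp$ has dimension $m$; by definition $\mult_P(X) = e(\mm_R, R)$, the Hilbert–Samuel multiplicity of $R$ with respect to its maximal ideal $\mm_R = \mm_P R$, i.e.\ $(m-1)!$ times the leading coefficient of the Hilbert polynomial of $\mathrm{gr}_{\mm_R}(R)$ (note the indexing: with $\dim X = m$ one has the normalization $(\dim X - 1)! = (m-1)!$ as in the definition above, which I'd double-check against the convention that makes a reduced point have multiplicity $1$).

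The heart of the argument is then the following classical statement, which I would cite from \cite[Chapter 12]{Eisenbud} (minimal reductions) together with \cite[Chapter 7]{Fulton} (intersection multiplicities of a variety with a linear space): if $\ell_1, \dots, \ell_m$ are general linear forms vanishing at $P$, then their images in $\mm_R$ form a system of parameters generating a minimal reduction of $\mm_R$, hence $e(\mm_R, R) = e((\ell_1,\dots,\ell_m)R, R) = \mathrm{length}_R\big(R/(\ell_1,\dots,\ell_m)R\big)$ provided the quotient has finite length; and this length is exactly Serre's $\mathrm{Tor}$-formula expression for $i(P, X\cdot L; \PP^n_F)$ when $L = \{\ell_1 = \cdots = \ell_m = 0\}$ meets $X$ properly at the isolated point $P$ — the higher $\mathrm{Tor}$'s vanish because $\ell_1, \dots, \ell_m$ is a regular sequence on the local ring of $L$. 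Genericity of $L$ enters twice: it guarantees that $X \cap L$ is zero-dimensional at $P$ (so the intersection is proper and $i(P, X\cdot L;\PP^n_F)$ is defined), and that $(\ell_1, \dots, \ell_m)$ is a genuine minimal reduction realizing the multiplicity — both are satisfied for $L$ in a dense open subset of the relevant linear system of $m$-planes through $P$, which is nonempty since $F = \bar F$ is infinite.

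The main obstacle I anticipate is packaging the genericity cleanly: one must choose $L$ so that simultaneously (a) $L \cap X = \{P\}$ set-theoretically near $P$, i.e.\ $L$ avoids the other components of $X \cap L'$ for the "too special" choices, which is a Bertini-type transversality statement, and (b) the $m$ linear forms cutting out $L$ reduce to a system of parameters generating a minimal reduction of $\mm_R$ — the existence of such a reduction by general elements of $\mm_R$ is where one needs the residue field (here $F$) to be infinite, exactly as in \cite[Corollary 8.3.6 or Chapter 12]{Eisenbud}. Once the single generic $L$ is fixed with both properties, identifying $\mathrm{length}_R(R/(\ell_1,\dots,\ell_m)R)$ with the intersection multiplicity in Fulton's sense is routine, since for a regular sequence the Tor-formula collapses to the naive length. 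I would also note, for the reduction at the top, that any $L$ good for one component $X_i$ through $P$ can be chosen good for all of them simultaneously, intersecting finitely many dense opens.
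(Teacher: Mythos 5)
Your approach is genuinely different from the paper's, and it contains a gap in the identification of the intersection multiplicity with a length. The paper's proof is a two-line citation: it reduces to $X$ irreducible, notes that the tangent cone of $X$ at $P$ has the same dimension as $X$, and then invokes \cite[p.~227, Corollary~12.4]{Fulton}, which says that $i(P,X\cdot L;\PP^n_F)=\mult_P(X)\mult_P(L)$ provided the projectivized tangent cones of $X$ and $L$ at $P$ meet properly --- a condition that a general $L$ through $P$ of complementary dimension satisfies precisely because of the tangent-cone dimension remark. You instead go through Hilbert--Samuel multiplicities and minimal reductions, which is a legitimate (and more self-contained) route, but one of your steps fails.

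The problem is the chain
$e(\mm_R,R)=e(\mathfrak{q},R)=\mathrm{length}_R(R/\mathfrak{q})=i(P,X\cdot L;\PP^n_F)$ with $\mathfrak{q}=(\ell_1,\dots,\ell_m)R$. The first equality (reduction ideal) is fine, but the second one, $e(\mathfrak{q},R)=\mathrm{length}_R(R/\mathfrak{q})$, holds if and only if $R=\OO_{X,P}$ is Cohen--Macaulay; in general one only has $e(\mathfrak{q},R)\le\mathrm{length}_R(R/\mathfrak{q})$, and $X$ need not be Cohen--Macaulay at $P$. For the same reason your justification of Tor-vanishing is off: $\mathrm{Tor}_i^{\OO_{\PP^n,P}}(\OO_{X,P},\OO_{L,P})$ is the $i$-th Koszul homology $H_i(K_\bullet(\ell_1,\dots,\ell_m;\OO_{X,P}))$, and this vanishes for $i>0$ exactly when the $\ell_j$ form a regular sequence \emph{on $\OO_{X,P}$}, i.e.\ again when $X$ is Cohen--Macaulay at $P$. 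It is irrelevant whether the $\ell_j$ are regular on $\OO_{L,P}$ (they are not even nonzero there). The correct replacement is to bypass the naive length entirely: Fulton's Proposition~7.1 (or Serre's theorem that the alternating sum of Koszul homology lengths equals the Samuel multiplicity of the parameter ideal) gives directly that $i(P,X\cdot L;\PP^n_F)=e(\mathfrak{q},\OO_{X,P})$ for a proper intersection with the regularly embedded $L$. With that substitution your argument becomes correct and is a reasonable alternative to the paper's tangent-cone citation, trading a single reference to Fulton~12.4 for the machinery of minimal reductions plus Fulton~7.1.
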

\begin{proof}[Proof of Lemma~\ref{lem:multiplicity2}]
  We may assume that $X$ is an irreducible closed subvariety of
  $\PP^n_F$.  It is well-known that the dimension of the tangent cone
  at a point of $X$ is equal to $\dim(X)$.  Hence it follows from
  \cite[p.~227, Corollary 12.4]{Fulton} that
  $i(P,X\cdot L;\PP^n_F)=\mult_P(X)\mult_P(L)=\mult_P(X)$ for a
  general choice of $L$.
\end{proof}
\begin{proof}[Proof of Lemma~\ref{lem:multipicity3}]
  We may assume that $F$ is algebraically
  closed. 
  Take a linear subspace $L\subset\PP^n$ of dimension
  $\codim(X)+\codim(Y)$ passing through $P$ such that
  $\mult_P(X\cdot Y)=i(P,(X\cdot Y)\cdot L;\PP^n)$, by
  Lemma~\ref{lem:multiplicity2}.  Since $X\cdot L$ and $Y$ are of
  complementary dimension, we have
  \begin{equation*}
    \begin{split}
      \mult_P(X\cdot Y)&=i(P,(X\cdot Y)\cdot L;\PP^n)\\
                       &=i(P,(X\cdot L)\cdot Y;\PP^n)\\
                       &\geq \mult_P(X\cdot L)\mult_P(Y),
    \end{split}
  \end{equation*}
  where the last inequality follows from \cite[p.~227, Corollary
  12.4]{Fulton}.  Now, again in view of Lemma~\ref{lem:multiplicity2},
  take a linear subspace $L'$ of dimension $\codim(X\cdot L)$ passing
  through $P$ such that
  $\mult_P(X\cdot L)=i(P,(X\cdot L)\cdot L';\PP^n)$.  Then,
  \begin{equation*}
    \begin{split}
      \mult_P(X\cdot L)&=i(P,(X\cdot L)\cdot L';\PP^n)\\
                       &=i(P,X\cdot (L\cdot L');\PP^n)\\
                       &\geq \mult_P(X)\mult_P(L\cdot L')=\mult_P(X),
    \end{split}
  \end{equation*}
  where again the inequality follows from \cite[p.~227, Corollary
  12.4]{Fulton}.  Lemma~\ref{lem:multipicity3} is now proved.
\end{proof}

\begin{proof}[Proof of Proposition~\ref{lem:alpha:highdim}]

  We will proceed by induction on $m$ but first we reason for general
  $m\ge 1$.

  It is enough to prove the proposition under the extra assumption
  that $F$ is algebraically closed, since the general case follows
  from this case.  Indeed, suppose that we know the proposition over
  the algebraic closure of $F$, but not over $F$ itself. Then, any
  interpolation determinant built up with the homogeneous monomials of
  degree $\lfloor ck^{2^{m-1}}\rfloor $ evaluated in some points of
  $\Gamma(F)$ vanishes, and hence, by linear algebra we find a
  hypersurface of degree no more than $ck^{2^{m-1}}$ defined already
  over $F$.

  Let us from now on suppose that $F$ is algebraically closed.  In
  this proof, by `generic' we mean outside some proper Zariski closed
  subset defined over $F$.  By projecting $\Gamma$ to $\P^{m+1}$
  (along a generic projection) there is furthermore no harm in
  assuming that $\Gamma$ has codimension $1$, that is,
  $m=n-1$. Indeed, a generic projection is birational on each
  irreducible component appearing in the effective cycle $\Gamma$, and
  preserves the degree of each irreducible component of
  $\Gamma$. Furthermore, for such a generic projection $\pi$, one has
  that the multiplicity of a point $P$ on $\Gamma$ is less or equal to
  the multiplicity of $\pi(P)$ on $\pi(\Gamma)$ (see
  e.g.~\cite[Prop.\,8.3]{Rydh}, \cite[Ex.\, 4.9\, Part I]{hartshorne},
  and Zariski's multiplicity formula for finite projections
  \cite[p.~297]{Zariski-Samuel}). This shows we can assume that
  $m=n-1$ holds.


  Now write
$$
\Gamma = \sum n_j C_j
$$
with (mutually different) irreducible hypersurfaces $C_j$ in $\P_F^n$,
and integers $n_j>0$.
For each $j$ let $C'_j $ be the effective cycle given by
$$
{\sum_{i=0}^n a_i \frac{d}{dx_i} f_j=0}
$$
if $C_j$ is given by ${f_j=0}$, for some generic $(a_0,\ldots,a_1)$ in
$F^{n+1}$. Note that $C_j'$ is of dimension $m$. Indeed, its defining
polynomial cannot identically vanish since $F$ is algebraically closed
and $C_j$ irreducible (and thus, $f$ cannot be a sum of $p$-th
powers).  Consider the effective cycle of pure dimension $n-2 = m-1$
$$
A = \sum_j n_j C_j \cdot (n_j C'_j + \sum_{\ell \neq j} n_\ell C_\ell)
$$
where the product is the intersection product (note that $C'_j$
intersects $C_j$ properly, and, that $C_j$ intersects $C_\ell$
properly when
$\ell\not=j$). 


B\'ezout's Theorem implies that
\begin{equation}\label{A:D2}
  \deg A < D^2.
\end{equation}
We may clearly assume that $D/k\ge 2$ since otherwise there is nothing
to prove as one can simply take $\Gamma$ itself.
\begin{Claim}
  For every point $P $ on $\Gamma(F)$ with $\mult_P \Gamma > D/k$, one
  has
  \begin{equation}\label{eq:(*)}
    \mult_P  A > D^2/8k^2, 
  \end{equation}
  unless $P $ is smooth in some $C_j$ and $n_j >D/(2k)$.
\end{Claim}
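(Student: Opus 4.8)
My plan is to bound $\mult_P A$ from below by a constant times $(\mult_P\Gamma)^2$ and then read off the claim. The two ingredients I would use are Lemma~\ref{lem:multipicity3} (super-multiplicativity of $\mult_P$ under proper intersection) and the linearity of $\mult_P$ on effective cycles; after that it is an elementary case analysis according to which irreducible component of $\Gamma$, if any, carries most of the multiplicity at $P$.

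I would write $\mu_j:=\mult_P(C_j)$ (with $\mu_j=0$ if $P\notin C_j$) and $S:=\mult_P\Gamma=\sum_j n_j\mu_j$, so $S>D/k$ by hypothesis. First I would record the inequality $\mult_P(C'_j)\ge\mu_j-1$ whenever $\mu_j\ge1$: in a local affine chart at $P$ the defining equation of $C'_j$ differs from a directional derivative of the local equation of $C_j$ only by terms of order $\ge\mu_j$ (coming from Euler's relation for homogeneous polynomials), and differentiation drops the order of vanishing at $P$ by at most one. Since, as already noted when $A$ was constructed, $C_j$ meets $C'_j$ properly and $C_j$ meets $C_\ell$ properly for $\ell\neq j$, applying Lemma~\ref{lem:multipicity3} to each intersection product occurring in $A$ together with linearity of $\mult_P$ would give
\begin{align*}
  \mult_P A&=\sum_j n_j\,\mult_P\Bigl(C_j\cdot\bigl(n_j C'_j+{\textstyle\sum_{\ell\neq j}}n_\ell C_\ell\bigr)\Bigr)\\
  &\ge\sum_j n_j\mu_j\bigl(n_j(\mu_j-1)+{\textstyle\sum_{\ell\neq j}}n_\ell\mu_\ell\bigr)=\sum_j n_j\mu_j(S-n_j),
\end{align*}
where the summands with $\mu_j=0$ vanish and the last equality uses $n_j\mu_j+\sum_{\ell\neq j}n_\ell\mu_\ell=S$; note that each contributing factor satisfies $S-n_j\ge n_j(\mu_j-1)\ge0$.

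Next I would split into cases according to $T:=\max_j n_j\mu_j$. If $T\le S/2$, then, using $n_j\le n_j\mu_j$ whenever $\mu_j\ge1$, one has $\sum_j n_j^2\mu_j\le\sum_j(n_j\mu_j)^2\le T\,S\le S^2/2$, so $\mult_P A\ge\sum_j n_j\mu_j(S-n_j)=S^2-\sum_j n_j^2\mu_j\ge S^2/2$. If $T>S/2$, I pick the (necessarily unique) index $j_0$ with $n_{j_0}\mu_{j_0}>S/2$; keeping only the $j_0$-summand above, $\mult_P A\ge n_{j_0}\mu_{j_0}(S-n_{j_0})>\tfrac{S}{2}(S-n_{j_0})$, and if moreover $\mu_{j_0}\ge2$ then $n_{j_0}\le n_{j_0}\mu_{j_0}/2\le S/2$, hence $S-n_{j_0}\ge S/2$ and $\mult_P A>S^2/4$. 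In both of these cases $\mult_P A\ge S^2/4>D^2/(4k^2)>D^2/(8k^2)$, the asserted bound. The only remaining possibility is $T>S/2$ with $\mu_{j_0}=1$; then $n_{j_0}=n_{j_0}\mu_{j_0}>S/2>D/(2k)$ and $\mult_P(C_{j_0})=1$, i.e.\ $P$ is a smooth point of $C_{j_0}$ with $n_{j_0}>D/(2k)$, which is exactly the excluded case.

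The step I expect to be the main obstacle is the inequality $\mult_P(C'_j)\ge\mu_j-1$, since one must carefully compare derivatives taken with respect to the projective coordinates $x_0,\ldots,x_n$ with the order of vanishing of a local equation of $C_j$ at $P$; the rest is Lemma~\ref{lem:multipicity3} plus bookkeeping. It is also worth noting that this is exactly the step that forces the ``unless'' clause: at a smooth point $P$ of a component $C_{j_0}$ carrying more than half of $\mult_P\Gamma$, the cycle $C'_{j_0}$ lowers the multiplicity at $P$ only to $\mu_{j_0}-1=0$ and so contributes nothing, so at such a point no lower bound of the shape $\mult_P A\gtrsim(\mult_P\Gamma)^2$ can hold and these points must be captured by other means.
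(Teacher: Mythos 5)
Your proof is correct and follows essentially the same route as the paper's: the same super-multiplicativity lemma (Lemma~\ref{lem:multipicity3}), the same derivative bound $\mult_P(C'_j)\ge\mult_P(C_j)-1$, and a case split according to whether a single component carries more than half of the total weight $\sum_j n_j\mult_P(C_j)$. The only difference is in the bookkeeping: you combine the diagonal and off-diagonal contributions into the single lower bound $\sum_j n_j\mu_j(S-n_j)$, which lets you bypass the paper's separate elementary Lemma~\ref{lem:silly-arithmetic}.
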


Let us prove Claim 1. Suppose first
that $P $ is singular on some $C_{j_0}$ with
$$
n_{j_0} \mult_P (C_{j_0}) > D/(2k).
$$
Then we have $\mult_P (C'_{j_0}) \ge \mult_P (C_{j_0})-1\ge 1$ and
thus, by Lemma~\ref{lem:multipicity3}, 
\begin{multline*}
\mult_P A = \mult_P \left( \sum_{j,\ P \in C_j} n_j C_j \cdot (n_jC_j' + \sum_{\ell\not=j,\ P \in C_\ell} n_\ell
  C_\ell)\right) \\
  \ge \mult_P ( n_{j_0}C_{j_0}\cdot n_{j_0}C_{j_0}') \ge n_{j_0}^2 \left( (\mult_P (C_{j_0}))^2 - \mult_P (C_{j_0}) \right) > D^2/8k^2,
\end{multline*}
and (\ref{eq:(*)}) follows. Finally suppose that
$n_j \mult_P (C_j) \le D/(2k)$ for each $j$ such that $P $ belongs to
$C_j$. Note that $P $ lies in more than one component, by our
assumption $\mult_P \Gamma > D/k$.  We calculate, using Lemma
\ref{lem:silly-arithmetic},
\begin{multline*}
  \mult_P A \ge \mult_P \left( \sum_{j,\ P \in C_j} n_j C_j \cdot (\sum_{\ell\not=j,\ P \in C_\ell} n_\ell C_\ell)\right) \\
  \ge \frac{1}{2} (\sum_{j,\ P \in C_j} n_j (\mult_P C_j))^2 \ge \frac{1}{2} (\mult_P \Gamma)^2 > D^2/2k^2.
\end{multline*}
This proves the claim.

We can cover all these points where (\ref{eq:(*)}) fails by
considering the hypersurface $\sum_j C_j$ where the sum is taken over
all $j$ that satisfy $n_j>D/(2k)$; note that its degree is no more
than $ck$ for some constant
$c$. 

Now, for the points where (\ref{eq:(*)}) is satisfied, the claim
follows by induction on $m$, as follows. If $m=1$, then by
(\ref{A:D2}) there are no more than $\sim k^2$ many points $P$ on
$A(F)$ satisfying (\ref{eq:(*)}), which can clearly be interpolated by
a curve of degree $\sim k$.

If $m>1$, we apply the induction hypothesis to $A$, with $8k^2$
instead of $k$ and $D^2$ instead of $D$.
\end{proof}

\begin{Lem}\label{lem:silly-arithmetic}
  Let $x_1,\ldots,x_n$ be non-negative integers and assume
  $2x_j<x_1+\cdots+x_n$ for all j. Then
  \begin{equation*}
    (x_1+\cdots+x_n)^2 < 2 \sum_{j\neq \ell } x_j x_\ell.
  \end{equation*}
\end{Lem}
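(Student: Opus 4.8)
The plan is to prove the elementary inequality
$$
(x_1+\cdots+x_n)^2 < 2\sum_{j\neq\ell} x_j x_\ell
$$
under the hypothesis that $2x_j < x_1+\cdots+x_n$ for every $j$, by expanding the left-hand side and comparing term by term. Writing $S=x_1+\cdots+x_n$, one has $S^2 = \sum_j x_j^2 + \sum_{j\neq\ell} x_j x_\ell$, so the claimed inequality is equivalent to $\sum_j x_j^2 < \sum_{j\neq\ell} x_j x_\ell$, i.e. to $\sum_j x_j^2 < S^2 - \sum_j x_j^2$, i.e. to $2\sum_j x_j^2 < S^2$.

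The first step is therefore to reduce to showing $2\sum_j x_j^2 < S^2$. The second step is to observe that the hypothesis $2x_j < S$ for all $j$ gives $x_j < S/2$, hence
$$
2\sum_j x_j^2 = \sum_j 2x_j\cdot x_j < \sum_j S\cdot x_j = S\cdot S = S^2,
$$
using $2x_j < S$ on each summand (and $x_j\geq 0$ so that multiplying the inequality by $x_j$ is legitimate). This already closes the argument. One small point worth checking to make the inequalities strict: if $S=0$ then all $x_j=0$ and the hypothesis $2x_j<S$ fails (it reads $0<0$), so we may assume $S>0$; and the strictness $2x_j<S$ is exactly what is assumed, so the chain of inequalities above is strict as soon as at least one $x_j>0$, which holds since $S>0$. (If one is nervous about the edge case where only one $x_j$ is nonzero, note the hypothesis $2x_j < S$ with a single nonzero coordinate $x_{j_0}$ would force $2x_{j_0} < x_{j_0}$, impossible for $x_{j_0}>0$; so in any admissible configuration at least two coordinates are positive, but this is not even needed for the proof.)

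I do not expect any real obstacle here: the statement is a two-line pigeonhole-type computation, and the only thing to be careful about is tracking strictness and the degenerate all-zero case, both of which are handled by noting the hypothesis cannot hold when $S=0$. I would write it up as: reduce to $2\sum x_j^2 < S^2$ by expanding $S^2$; then bound $2\sum x_j^2 = \sum (2x_j)x_j < \sum S x_j = S^2$ using the hypothesis and $x_j\geq 0$.

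\begin{proof}[Proof of Lemma~\ref{lem:silly-arithmetic}]
  Write $S=x_1+\cdots+x_n$. If $S=0$ then all $x_j=0$ and the
  hypothesis $2x_j<S$ fails, so we may assume $S>0$. Expanding,
  $$
  S^2=\sum_{j}x_j^2+\sum_{j\neq\ell}x_jx_\ell,
  $$
  so the asserted inequality $S^2<2\sum_{j\neq\ell}x_jx_\ell$ is
  equivalent to $2\sum_j x_j^2<S^2$. Now, since each $x_j$ is a
  non-negative integer with $2x_j<S$, multiplying by $x_j\geq 0$ gives
  $2x_j^2\leq S x_j$, with strict inequality whenever $x_j>0$. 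As
  $S>0$, at least one $x_j$ is positive, hence
  $$
  2\sum_j x_j^2<\sum_j S x_j=S\cdot S=S^2,
  $$
  which is what we wanted.
\end{proof}
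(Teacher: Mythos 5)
Your proof is correct and follows essentially the same route as the paper's: expand the square to reduce to the termwise comparison (the paper phrases it as $\sum_j x_j^2 < \sum_{j\neq\ell} x_jx_\ell$, you as $2\sum_j x_j^2 < S^2$, which are the same thing), then bound each $x_j^2$ using $2x_j < S$ multiplied by $x_j \ge 0$. Your write-up is a bit more careful about strictness and the degenerate $S=0$ case, but the underlying argument is identical.
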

\begin{proof}
  It is enough to show
  $x_1^2+\cdots+x_n^2 < \sum_{j\neq \ell } x_j x_\ell $, and this is
  clear since
  \begin{multline*}
    x_1^2+\cdots+x_n^2 < \\
    x_1(x_2+\cdots+x_n)+x_2(x_1+x_3+\cdots+x_n)+\cdots+x_n(x_1+\cdots+x_{n-1}) = \\
    \sum_{j\neq \ell } x_j x_\ell.
  \end{multline*}
\end{proof}

\section{Proof of Theorem~\ref{thm:main} for $K=\Q$}



\subsection*{Step 1. Basic reductions}\label{step:1}

We 
reduce to the case with $n=2$
and with furthermore
\begin{equation}\label{eq:logHd}
  1\ll (\log H)^2 < d <  H^{3/2}.  
\end{equation}


By \cite[Proposition 4.3.2]{CCDN-dgc} for $m=1$ we find a degree $d$ curve $C'$ in $\P^2_\Q$ which is birationally equivalent to $C$ and which satisfies for all $H>2$
\begin{equation}\label{eq:CC'}
\#C(\Q,H)\le \#C'(\Q,c_nd^{e_n}H) + d^2
\end{equation}
for some constants $c_n$ and $e_n$ depending only on $n$. Hence, it is sufficient to prove Theorem~\ref{thm:main} for $C'$ since obviously we may assume that $d<H^{(n+1)/2}$ from
\begin{equation}
  \# C(\Q,H) \le \# \P^n_\Q(\Q,H) \le c' H^{n+1}
\end{equation}
for some $c'$.
By \cite[Theorem 2]{CCDN-dgc}, one has
\begin{equation}\label{eq:d4}
  \# C'(\Q,H) \le c d^{4}H^{\frac{2}{d}},
\end{equation}
for some absolute constant $c$, and, clearly one has
\begin{equation}\label{eq:H3}
\# C'(\Q,H) \le \# \P^2_\Q(\Q,H) \le c' H^{3},
\end{equation}
for some constant $c'$.
If the negation of (\ref{eq:logHd}) holds, then
the bounds (\ref{eq:Sal:log}) of Theorem
\ref{thm:main} for $K=\Q$ clearly follow for $C'$ from the above inequalities (\ref{eq:d4}) and (\ref{eq:H3}), with $\kappa= 4$.
To prove Theorem~\ref{thm:main} with $K=\Q$ it is thus sufficient to
prove the following Proposition.

\begin{Prop}\label{prop:g}
  With any $C$, $d$ and $H$ as in Theorem~\ref{thm:main}, $\kappa=12$,
  and under the extra assumptions $K=\Q$, $n=2$ and the inequalities (\ref{eq:logHd}),
  there exist no more than $c(\log H)^\kappa$ many polynomials $g_i$
  of degree $d-1$ whose joint zero locus contains $C(\Q,H)$, for some
  absolute constant $c$.
\end{Prop}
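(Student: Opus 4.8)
The plan is to follow the proof sketch from Section~\ref{sec:proof-sketch} and make it quantitative. Fix the curve $C\subset\P^2_\Q$ of degree $d$ and height parameter $H$, with $(\log H)^2<d<H^{3/2}$ as in~(\ref{eq:logHd}). The first step is to choose a family of about $(\log H)^4$ distinct primes $p_i$, each of size $\asymp (\log H)^4$ (this is possible by the prime number theorem, since there are $\gg (\log H)^4/\log\log H$ primes in such a range, comfortably more than we need). For each prime $p=p_i$ consider the reduction $C_p\subset\P^2_{\FF_p}$, and split its $\FF_p$-points into \emph{low multiplicity} points, those with $\mu_P<d/\log H$, and \emph{high multiplicity} points, those with $\mu_P\ge d/\log H$.

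Second, I would handle the low multiplicity points via the $p$-adic interpolation determinant method of Heath-Brown and Salberger (\cite{Heath-Brown-Ann},~\cite{Salberger-dgc}). For a fixed prime $p$ and a fixed point $P\in C_p(\FF_p)$ of multiplicity $\mu_P$, the standard estimate bounds the number of points of $C(\Q,H)$ reducing to $P$ that are \emph{not} captured by a single auxiliary curve of degree $d-1$: this count is $O(\mu_P^2)$ together with a factor controlled by $H^{2\mu_P/d}/p^{\text{(something growing)}}$, and in our regime $H^{2\mu_P/d}\le H^{2/\log H}=e^2=O(1)$ while $p\asymp(\log H)^4$ is large compared to $H^{2/d}$, so one gets an auxiliary curve $g_{P,p}$ of degree $d-1$ vanishing on all but $O(1)$ of those points. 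Summing over the at most $p^2\asymp(\log H)^8$ points $P$ and over the $\asymp(\log H)^4$ primes, this contributes at most $O((\log H)^{12})$ auxiliary polynomials of degree $d-1$, plus an exceptional set of $O((\log H)^{12})$ individual points which we simply absorb into finitely many linear forms (or just note that $O((\log H)^{12})$ isolated points are cut out by $O((\log H)^{12})$ polynomials of degree $1\le d-1$). This accounts for the exponent $\kappa=12$.

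Third, I would deal with the points of $C(\Q,H)$ that have high multiplicity modulo \emph{every} chosen prime $p_i$. By Proposition~\ref{lem:alpha} applied with $D=d$ and $k=\log H$, the high multiplicity points of $C_p$ lie on a curve $\Gamma_p\subset\P^2_{\FF_p}$ of degree $N\log H$ for a universal $N$; since $N\log H<d$ by~(\ref{eq:logHd}), this is a genuinely lower degree. Now take a polynomial interpolation determinant of size $M\asymp(\log H)^2$ formed from the monomials of degree $N\log H$ evaluated at any $M$ of the remaining rational points: for each prime $p_i$ these points lie on $\Gamma_{p_i}$, so the monomials of degree $N\log H$ are linearly dependent modulo $p_i$, hence the determinant is divisible by $p_i$; since this holds for all $\asymp(\log H)^4$ primes $p_i$, the determinant is divisible by $\prod_i p_i$, which is of size $\exp(\asymp(\log H)^5)$. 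On the other hand the entries of the determinant are bounded (as integers, after clearing projective coordinates) by a power of $H$ of the form $H^{O(\log H)}$, so by the Hadamard bound the determinant has absolute value at most $H^{O((\log H)^2)}=\exp(O((\log H)^3))$, which is much smaller than $\prod_i p_i$. Therefore the determinant vanishes identically; by linear algebra this means all the remaining rational points lie on a single curve of degree $N\log H<d$, i.e., are contained in the zero locus of one more polynomial $g_0$ of degree $\le d-1$.

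The main obstacle I expect is the careful bookkeeping in the $p$-adic determinant step for low multiplicity points: one has to make the estimate uniform simultaneously in $d$, in $p$, and in the multiplicity $\mu_P$, check that the choice $p\asymp(\log H)^4$ really makes the relevant ``gain factor'' $p^{\,\text{rank}}$ beat the archimedean height contribution $H^{O(\mu_P\cdot(\text{size}))}$ when $\mu_P$ ranges up to $d/\log H$, and confirm that the degree of the auxiliary interpolating curve can be taken to be exactly $d-1$ (not merely $O(d)$) so that the final B\'ezout step yields the clean $d^2$ bound. One also needs the primes to be of good reduction for $C$ and to avoid the finitely many primes dividing the relevant resultants/discriminants; since there are only $O(\log(\text{height of }C))$ such bad primes and $C$ may be assumed (after the reduction~(\ref{eq:CC'})) to have controlled height, this is harmless but must be spelled out. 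Everything else is a matter of collecting the $O((\log H)^{12})$ auxiliary polynomials produced above into the statement of Proposition~\ref{prop:g}.
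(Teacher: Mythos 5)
Your proposal follows the paper's proof of Proposition~\ref{prop:g} essentially step for step: the low-multiplicity residues are handled one prime and one point at a time by the $p$-adic interpolation determinant with monomials of degree $d-1$ (the paper's Step~2, using Corollary~2.5 of \cite{CCDN-dgc} for the $p$-adic valuation $e\ge s^2/(2\mu_P)-as$), the points of high multiplicity modulo every chosen prime are caught by Proposition~\ref{lem:alpha} together with a single interpolation determinant in degree $N\log H$ that is divisible by all the primes and hence vanishes, and the count (number of primes) $\times$ (number of $\FF_{p}$-points of $\P^2$) $\asymp(\log H)^4\cdot(\log H)^8$ gives $\kappa=12$. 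Two minor inaccuracies that do not affect the outcome: the vanishing of the interpolation determinant places \emph{all} points of $C(\Q,H)$ over a given $P$ on one degree-$(d-1)$ curve, so there is no $O(1)$ exceptional set to absorb; and in the final comparison the Hadamard bound is $H^{O((\log H)^3)}=\exp(O((\log H)^4))$ (not $\exp(O((\log H)^3))$) while with your primes $\prod_i p_i=\exp(\asymp(\log H)^4\log\log H)$ (not $\exp(\asymp(\log H)^5)$) — the inequality still closes thanks to the $\log\log H$ factor, whereas the paper takes all primes between $\log H$ and $M(\log H)^4$ and wins by choosing $M$ large.
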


Note that Theorem~\ref{thm:main} with $K=\Q$ 
clearly follows from Proposition~\ref{prop:g} and the above basic reduction to $n=2$
and to  (\ref{eq:logHd}), by B\'ezout's Theorem. This finishes the
basic reductions from Step 1. 




\subsection*{Step 2. Points of low multiplicity after reducing modulo
  prime numbers}

Let $C$, $d$ and $H$ be as in Theorem~\ref{thm:main} with $K=\Q$, and
assume $n=2$ and the inequalities from (\ref{eq:logHd}).

Take any prime number $p$ with $p > \log
H$, 
and let $C_p$ be the scheme-theoretic reduction of $C$ modulo $p$
inside $\P^2_{\mathbb{F}_p}$. Note that $C_p$ is a curve, of degree at
most $d$.  Indeed, we may suppose that $C$ is given by a primitive
polynomial with integral coefficients.
Consider a point $P$ on $C_p(\FF_p)$ and write $\mu_P$ for
$\mult_P(C_p)$, the multiplicity of $P$ on $C_p$. Let $\Xi$ be a
collection of points in $C(\Q,H)$ such that each point in $\Xi$ has
some primitive integer tuple of homogeneous coordinates which reduces
to $P$ modulo $p$. Assuming that $\mu_P$ is not too large, we want to
find a curve $g=0$ in $\P^2$ of degree $d-1$ such that $g$ vanishes on
all the points in $\Xi$.  To this end, we form an interpolation
determinant $\Delta$, namely, the determinant of a matrix
$(m_i(\xi_j))_{i,j}$ consisting of all monomials $m_i$ of degree $d-1$
in $x_0,x_1,x_2$ evaluated in some points $\xi_j$ of $\Xi$, or more
precisely, evaluated in a primitive integer tuple
$(\xi_{j1},\xi_{j2},\xi_{j3})$ of homogeneous coordinates of $\xi_j$,
for some $\xi_j$ in $\Xi$. This matrix is of dimension $s \times s$
with
$$
s:={d+1\choose 2}, 
$$
and, each entry is no larger than $H^{d-1}$.  Hence, from
(\ref{eq:logHd}), we find
$$
|\Delta| \ll d^2! H^{d^3} \ll d^{2d^2} H^{d^3}
\ll 
H^{2d^3}.
$$
By Corollary 2.5 of \cite{CCDN-dgc} (which renders \cite[Main Lemma
2.5]{SalbCrelle} independent of
$d$), 
we find that $\Delta$ is divisible by $p^e$ with
\begin{equation}\label{eq:emu}
  e \ge \frac{s^2}{2\mu_P} - as
\end{equation}
for some absolute constant $a$.


Let us assume that
\begin{equation}\label{eq:muP}
  \mu_P  
  < \frac{d}
  {\log H},
\end{equation}
so that 
(\ref{eq:logHd}) and (\ref{eq:emu}) imply
\begin{equation}\label{eq:e:log}
  e 
  \ge a'  \log H\cdot  d^3 
\end{equation}
for some constant $a'>0$.
If $\Delta$ is nonzero, we find 
$$
H^{a' (\log(\log H)) d^3} = (\log H)^{a' (\log H) d^3} < p^{a' (\log H) d^3} \le p^{e} \le |\Delta| \ll H^{2d^3},
$$

which clearly gives a contradiction
with 
(\ref{eq:logHd}).

Concluding, assuming $n=2$ and (\ref{eq:logHd}), and with $p$, $P$,
$\Xi$ 
and $\mu_P$ as in the beginning of Step 2,
if 
$\mu_P < d/ 
\log H$, then there is a homogeneous polynomial $g$ of degree $d-1$
vanishing on all points in $\Xi$.





\subsection*{Step 3. Proof of Proposition~\ref{prop:g}} 

Using the above material, we can now prove Proposition~\ref{prop:g}.


\begin{proof}[Proof of Proposition~\ref{prop:g}] 




  Put $d':= \lfloor N \log H
  \rfloor $ with constant $N$ from Proposition~\ref{lem:alpha}.  
  Consider all primes $p_i$ between $\log H$ and $M(\log H)^4$ for
  some large constant $M$. 

  Let $\Xi_s$ be the set of points on $C(\Q,H)$ whose reduction modulo
  $p_i$ has multiplicity at least $d/\log H$ for each of our primes
  $p_i$. Note that the reduction of $C$ modulo $p_i$ is a curve,
  since we may suppose that $C$ is
  given by a primitive homogeneous polynomial $f$ over $\Z$.  Any
  interpolation determinant $\Delta_{\Xi_s,d'}$ with monomials of
  degree $d'$ and points from $\Xi_s$ is divisible by all the chosen
  primes $p_i$, by Proposition~\ref{lem:alpha}, and where an interpolation determinant is as explained in Step 2.  Hence, if
  $\Delta_{\Xi_s,d'}$ is nonzero we have
  \begin{equation}\label{eq:d'}
    H^{c'M(\log H)^{3}} = e^{c'M(\log H)^4 } < \prod_{\log H < p_i < M(\log H)^4 } p_i \le |\Delta_{\Xi_s,d'}|,
  \end{equation}
  for some constant $c'$, where we have used that there are positive
  constants $c_1$ and $c_2$ with
  \begin{equation}\label{eq:T}
    c_1T< \sum_{p<T} \log p < c_2T,
  \end{equation}
  for large $T$, where the sum is over primes smaller than $T$, and
  where (\ref{eq:T}) follows from the prime number theorem.

  On the other hand, using (\ref{eq:logHd}) we find that
  \begin{equation}\label{eq:d''}
    |\Delta_{\Xi_s,d'}| \le  d'{}^2! H^{d'{}^3} \ll d'{}^{2d'{}^2} H^{d'{}^3}     < H^{2d'{}^3} < H^{c''\log H ^3} ,
  \end{equation}
for some constant $c''$.
  By (\ref{eq:d'}), (\ref{eq:d''}), and by taking $M$ large enough,
  $\Delta_{\Xi_s,d'}$ must vanish as soon $H$ is large enough (which
  holds by (\ref{eq:logHd})). 
  Hence, there is a polynomial $g_0$ of degree $d'$ that vanishes on
  $\Xi_s$. Observe that $d'<d$ by (\ref{eq:logHd}).

  The complement of $\Xi_s$ in $C(\Q,H)$ is treated using Step 2,
  as follows. 
  For each prime $p_i$ with $\log H < p_i < M(\log H)^4 $, and each
  point $P$ on $C_{p_i}(\mathbb{F}_{p_i})$ of multiplicity at most
  $d/\log H$, there is a polynomial of degree $d-1$ that vanishes on
  the subset of $C(\Q,H)$ consisting of points which reduce to $P$
  modulo $p_i$ (see Step 2 above).
  There clearly are no more than $c(\log H)^4 /\log (\log H)$ many
  such primes $p_i$, for some constant $c$ (by the prime number
  theorem), and for each such $p_i$ there are no more than
  $c'(\log H)^{8}$ many points on $C_{p_i}(\mathbb{F}_{p_i})$ for some $c'$.  Hence,
  in total there are no more than
$$
c(\log H)^{12}/\log ( \log H)
$$
many polynomials, as desired for Proposition~\ref{prop:g}, for some
constant $c$.
Proposition~\ref{prop:g} and thus also Theorem~\ref{thm:main} with
$K=\Q$ 
now
follows.  
\end{proof}

For the sake of completeness, we formulate a variant
of Theorem \ref{thm:main} for affine curves.
See Corollary~\ref{cor:AK} for a variant over any global field $K$.

\begin{Cor}[Affine curves]\label{cor:AQ}
  There is a constant $c=c(n)$ and an absolute constant $\kappa$ such
  that for any $d>1$, any irreducible algebraic curve $C$ of degree
  $d>0$ in $\A^n_\Q$ with $n>1$ and any $H>2$ one has
  \begin{equation}\label{eq:Sal:log:AO}
    \# C(\Z,H) \le c d^{2}H^{\frac{1}{d}}(\log H)^\kappa.
  \end{equation}
\end{Cor}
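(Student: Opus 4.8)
The plan is to reduce the affine statement to the projective Theorem~\ref{thm:main} by passing to the projective closure. Let $C\subset\A^n_\Q$ be an irreducible curve of degree $d$, and let $\bar C\subset\P^n_\Q$ be its projective closure; then $\bar C$ is an irreducible projective curve whose degree is still $d$ (taking the closure does not change the degree of a curve). There is a standard embedding $\A^n_\Q\hookrightarrow\P^n_\Q$ identifying $C(\Z,H)$ with a subset of $\bar C(\Q,\cdot)$; the only subtlety is that the affine height of an integer point $x=(x_1,\ldots,x_n)\in\Z^n$, namely $\max_i|x_i|$ bounded by $H$, corresponds after homogenizing to the point $[1:x_1:\cdots:x_n]$, whose absolute projective multiplicative height is $\max(1,|x_1|,\ldots,|x_n|)\le H$. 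So $C(\Z,H)$ is contained in $\bar C(\Q,H)$, and applying Theorem~\ref{thm:main} to $\bar C$ in $\P^n_\Q$ already gives
\begin{equation*}
  \# C(\Z,H)\le \#\bar C(\Q,H)\le cd^2H^{2/d}(\log H)^\kappa.
\end{equation*}

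The gap to be closed is the exponent of $H$: Theorem~\ref{thm:main} yields $H^{2/d}$ whereas Corollary~\ref{cor:AQ} claims the sharper $H^{1/d}$. To recover the factor-of-two improvement in the exponent one uses the standard trick of choosing the hyperplane at infinity more cleverly, or equivalently of working with a projective change of coordinates adapted to $C$: for integral points of affine height $\le H$ one can bound the number of them on $\bar C$ lying in any fixed affine chart by a quantity of order $d^2 H^{1/d}(\log H)^\kappa$ rather than $d^2 H^{2/d}(\log H)^\kappa$, because an integral point has one coordinate (the $\mathord{=}1$ one) of size $1$ while a general rational point of projective height $H$ has all coordinates of size up to $H$. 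Concretely, I would rerun the determinant argument behind Theorem~\ref{thm:main} but tracking the heights of the chosen primitive coordinate tuples: for points of $C(\Z,H)$ the primitive integer homogeneous coordinate vector is $(1,x_1,\ldots,x_n)$ of sup-norm $\le H$ but with the first entry equal to $1$, so the interpolation determinant $\Delta$ built from monomials of degree $d-1$ in $x_0,\ldots,x_n$ has entries bounded by $H^{d-1}$ but, after pulling out the known leading behaviour in $x_0$, the effective size is $H^{(d-1)\cdot(\text{number of rows})/2}$-type, gaining the factor two. This is exactly the affine-versus-projective bookkeeping already present in \cite{Heath-Brown-Ann} and \cite{CCDN-dgc}, and it changes $H^{2/d}$ to $H^{1/d}$ without affecting the power of $d$ or of $\log H$.

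The main obstacle, and the only place real care is needed, is precisely this height bookkeeping: one must verify that in \emph{every} step of the proof of Theorem~\ref{thm:main} — the basic reduction via \cite[Prop.~4.3.2]{CCDN-dgc}, the low-multiplicity determinant estimate of Step~2, and the multi-prime argument of Step~3 — the exponent of $H$ can be taken to be $1/d$ rather than $2/d$ when the input points are integral points of bounded affine height (equivalently, when one works with the affine degree-$d$ bound \cite[Theorem~2]{CCDN-dgc} in its affine form $\#C'(\Z,H)\le cd^4H^{1/d}$ instead of the projective $H^{2/d}$). Since all the estimates in Step~2 and Step~3 are driven by the crude bound "each entry of the interpolation matrix is $\le H^{\deg}$", replacing $H^{\deg}$ by $H^{\deg}$ for integral tuples of the special shape $(1,x_1,\ldots,x_n)$ (where one coordinate is pinned) only improves the determinant bound, so the contradiction forcing $\Delta$ or $\Delta_{\Xi_s,d'}$ to vanish is reached a fortiori, and the final count of auxiliary polynomials — hence the $d^2$ and $(\log H)^{12}$ — is unchanged. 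Alternatively, and perhaps most cleanly, one simply invokes the affine analogue of the whole argument verbatim, citing \cite{Pila-ast-1995,CCDN-dgc} for the input bounds in affine form; the reader can check that substituting "affine" for "projective" throughout Steps~1–3 costs nothing except the replacement of $2d_K/d$ by $d_K/d$ in the exponent, which with $K=\Q$ is the claimed $1/d$.
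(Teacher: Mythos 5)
Your opening reduction is correct and is also how the paper starts: $C(\Z,H)\subseteq \bar C(\Q,H)$ for the projective closure $\bar C$ (the projective height of $(1:x_1:\cdots:x_n)$ with $x_i\in\Z$, $|x_i|\le H$ is at most $H$), so Theorem~\ref{thm:main} gives $\#C(\Z,H)\le cd^2H^{2/d}(\log H)^\kappa$. But you then misdiagnose the remaining difficulty. There is no need to rerun the determinant method with affine height bookkeeping to convert $H^{2/d}$ into $H^{1/d}$: the two regimes of the Step~1 reduction dispose of the exponent for free. In the regime where the new argument of Theorem~\ref{thm:main} actually operates, namely $d>(\log H)^2$ (in particular $d>\log H$), one has $H^{2/d}=e^{2\log H/d}\le e^{2}=O(1)=O(H^{1/d})$, so the projective bound already \emph{is} the claimed affine bound up to a constant. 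In the complementary regime $d\le(\log H)^2$ one does not invoke Theorem~\ref{thm:main} at all, but quotes the known affine curve bound \cite[Theorem~3]{CCDN-dgc}, which has exponent $1/d$ on $H$ and a higher power of $d$; the surplus powers of $d$ are absorbed into $(\log H)^\kappa$ since $d\le(\log H)^2$. This two-line case split is the paper's entire proof.

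By contrast, the fix you propose --- rederiving the interpolation-determinant estimates for integer points with one pinned coordinate so as to ``gain a factor of two'' in the exponent --- is both unnecessary and, as written, not a proof. The mechanism you describe (entries bounded by $H^{d-1}$ but with ``effective size $H^{(d-1)\cdot(\text{rows})/2}$ after pulling out the leading behaviour in $x_0$'') is not where the affine improvement from $2/d$ to $1/d$ comes from in the determinant method; that improvement arises from a different count of monomials relative to the number of interpolated points and a different balance against the $p$-adic valuation lower bound, and carrying it through Steps~2--3 would be a genuine reworking of the argument rather than bookkeeping. Since the exponent only matters when $d\lesssim\log H$, and there the result is already in the literature, none of this reworking is needed.
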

\begin{proof}
  One can make a similar reduction as in Step 1, using \cite[Theorem 3]{CCDN-dgc} instead of \cite[Theorem 2]{CCDN-dgc}. Under these
  so-obtained assumptions, the corollary follows immediately from
  Theorem~\ref{thm:main} (by working with the closure of $C$ in
  $\PP^2$) since $H^{2/d}$ is bounded by a constant when $d>\log H$.
\end{proof}
The above proofs in this section have given a new approach to \cite[Theorems~1 and 2]{BCN-Sal}, as explained in the introduction. The upshot is that these new proofs generalize directly to all global fields, and, to higher dimensions, as we explain in the next sections. Another upshot is that the exponent $\kappa$ of $\log H$ in our main theorems can be made explicit; we leave this to the reader (except for Theorem \ref{thm:main} where $\kappa=12$), see Remark \ref{rem:highkappa}.



\section{Proof of Theorem~\ref{thm:Pila:highdim;d2}} 

The bulk for proving Theorem~\ref{thm:Pila:highdim;d2} is the
following analogue of Proposition~\ref{prop:g}.

\begin{Prop}\label{prop:g:Pila}
  With $X$, $d$ and $H$ as in Theorem~\ref{thm:Pila:highdim;d2} and
  with the extra assumptions that $n>2$ and
  \begin{equation}\label{eq:logHd:Pila:p}
    1\ll_n (\log H)^N < d <  H 
  \end{equation}
  for some constant $N=N(n)$, there exist no more than
  $c(\log H)^\kappa$ many polynomials $g_i$ of degree $d-1$ whose
  joint zero locus contains $X(\Z,H)$, for some constants $c=c(n)$ and
  $\kappa=\kappa(n)$.
\end{Prop}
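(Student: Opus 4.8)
The plan is to lift the argument behind Proposition~\ref{prop:g} one dimension up, replacing the plane curve by a hypersurface. First I would pass from the affine $X\subset\A^n_\Q$ to its projective closure $\overline X\subset\P^n_\Q$, an irreducible hypersurface of degree $d$, and fix a primitive integral defining form $f\in\Z[x_0,\dots,x_n]$, so that for every prime $p$ the reduction $\overline X_p\subset\P^n_{\FF_p}$ is again a hypersurface of degree $d$ (possibly reducible and non-reduced, which is harmless since the tools below are stated for effective cycles). Each point of $X(\Z,H)$ has a primitive integral representative of height $\le H$ and a well-defined reduction modulo $p$ lying on $\overline X_p$. As in the curve case, the proof splits into a determinant-method part, handling the points of ``low multiplicity'' modulo some prime, and a part built on Proposition~\ref{lem:alpha:highdim:1}, handling the points of ``high multiplicity'' modulo all the primes in play.

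For the low-multiplicity part, fix a prime $p>\log H$ and a point $P\in\overline X_p(\FF_p)$, let $\Xi\subset X(\Z,H)$ be the subset reducing to $P$, and set $\mu_P:=\mult_P(\overline X_p)$. I would form the interpolation determinant $\Delta$ from the $s=\binom{d-1+n}{n}$ monomials of degree $d-1$ in $x_0,\dots,x_n$, evaluated at primitive coordinate tuples of points of $\Xi$, as in Step~2 of the proof for $K=\Q$. Its entries are bounded by $H^{d-1}$, so $|\Delta|\le s!\,H^{(d-1)s}\le H^{c_n d^{n+1}}$ using (\ref{eq:logHd:Pila:p}). On the other hand, the $p$-adic singularity estimate for interpolation determinants --- the higher-dimensional analogue, uniform in $d$, of \cite[Cor.~2.5]{CCDN-dgc}, i.e.\ of the estimates of Salberger and Heath-Brown \cite{Salberger-dgc,Heath-Brown-Ann} --- gives $\ord_p\Delta\ge s^{1+1/(n-1)}/(c_n\,\mu_P^{1/(n-1)})-O_n(s)$. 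Combining the two, once $\mu_P<d/(\log H)^{t}$ for a suitable $t=t(n)$ one gets $\ord_p\Delta\ge a_n\,d^{n+1}(\log H)^{t/(n-1)}$, and then $p^{\ord_p\Delta}\le|\Delta|$ together with $p>\log H$ would force $(\log H)^{t/(n-1)-1}\log\log H\ll_n 1$, which is impossible for $H$ large (and $H$ is large by (\ref{eq:logHd:Pila:p})) as soon as $t\ge n-1$. Hence $\Delta=0$, and by linear algebra one degree-$(d-1)$ form vanishes on all of $\Xi$.

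For the high-multiplicity part, put $d':=\lfloor c_n(\log H)^{t\cdot 2^{n-2}}\rfloor$ with $c_n$ the constant of Proposition~\ref{lem:alpha:highdim:1}, and take all primes $p_i$ with $\log H<p_i<M(\log H)^{L}$ for constants $L=L(n)$ and $M=M(n)$ to be fixed. By Proposition~\ref{lem:alpha:highdim:1} applied to $\overline X_{p_i}$ with $k=(\log H)^{t}$, the points of $\overline X_{p_i}(\FF_{p_i})$ of multiplicity $>d/(\log H)^{t}$ lie on a hypersurface of degree $\le c_n(\log H)^{t\cdot 2^{n-2}}$ over $\FF_{p_i}$; hence, if $\Xi_s\subset X(\Z,H)$ denotes the set of points of multiplicity $>d/(\log H)^{t}$ modulo \emph{every} $p_i$, then any interpolation determinant $\Delta_{\Xi_s,d'}$ built from degree-$d'$ monomials and points of $\Xi_s$ is divisible by each $p_i$, hence by $\prod_i p_i$. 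By the prime number theorem $\prod_i p_i\ge H^{c_1 M(\log H)^{L-1}}$, whereas $|\Delta_{\Xi_s,d'}|\le H^{(\log H)^{A_n}}$ for some $A_n=A_n(n)$ coming from $d'{}^{n+1}$; choosing $L\ge A_n+1$ and then $M$ large forces $\Delta_{\Xi_s,d'}=0$, so $\Xi_s$ lies on one extra hypersurface $g_0=0$ of degree $d'<d$ (using $d>(\log H)^{N}$ with $N>t\cdot 2^{n-2}$). Finally, every point of $X(\Z,H)\setminus\Xi_s$ has, modulo some $p_i$, multiplicity $\le d/(\log H)^{t}$, hence lies on one of the degree-$(d-1)$ forms produced above; since there are $\ll_n(\log H)^{L}$ primes $p_i$ and $\le p_i^{\,n}\ll_n(\log H)^{Ln}$ points $P$ for each, one obtains, together with $g_0$, at most $c_n(\log H)^{L(n+1)}$ forms of degree $\le d-1$ --- hence of degree exactly $d-1$ after multiplying by powers of a linear form --- whose common zero locus contains $X(\Z,H)$; so one may take $\kappa=L(n+1)$.

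The main obstacle is the singularity estimate used in the low-multiplicity part: one needs the $p$-adic order of vanishing of the hypersurface interpolation determinant in a form that is simultaneously uniform in $d$ (as \cite[Cor.~2.5]{CCDN-dgc} is for curves) and of strength in $\mu_P$ comparable to the curve estimate. If only a weaker $\mu_P$-dependence is available, the remedy is either to interpolate with forms of a larger, carefully chosen degree, or to slice $\overline X$ by generic linear subspaces and reduce to the curve case already proved in Proposition~\ref{prop:g}. Everything else --- Proposition~\ref{lem:alpha:highdim:1}, the B\'ezout and Hadamard-type size bounds, and the prime number theorem --- runs exactly as in the $K=\Q$ curve argument, so once the estimate is in place the rest is bookkeeping, at the cost of an exponent $\kappa=\kappa(n)$ larger than the value $12$ available for curves.
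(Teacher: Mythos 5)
Your proposal follows essentially the same route as the paper: the same split into low-multiplicity points (handled by the homogenized, $d$-uniform singularity estimate of \cite[Cor.~2.5]{CCDN-dgc}, which is exactly the bound $e\ge((n-1)!/\mu_P)^{1/(n-1)}\tfrac{n-1}{n}s^{1+1/(n-1)}-as$ you write down, so the ``main obstacle'' you flag is in fact already available and no fallback is needed) and high-multiplicity points (captured via Proposition~\ref{lem:alpha:highdim:1} and a product-of-primes versus Hadamard bound forcing the auxiliary determinant to vanish), with the same final count over primes and reduction points. The argument is correct and matches the paper's proof, with your version simply filling in the computations the paper delegates to the proof of Proposition~\ref{prop:g}.
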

\begin{proof}
  Consider the primes $p_i$ between $\log H$ and $M(\log H)^\ell$ for
  some large $M=M(n)$ and $\ell=\ell(n)$. Let $\Xi_s$ be the subset of
  $X(\Z,H)$ consisting of those points whose reduction modulo each of
  these primes $p_i$ has multiplicity at least $d/(\log H)^\alpha$ for
  some large $\alpha=\alpha(n)$. Then any interpolation determinant with
  monomials of degree no more than $d' := (\log H)^C$ on the points of
  $\Xi_s$ is divisible by all these primes $p_i$, for some constant
  $C=C(n)$, by Proposition~\ref{lem:alpha:highdim:1}. By a calculation
  as for the high multiplicity points in the proof of
  Proposition~\ref{prop:g}, one finds that $\Xi_s$ lies inside the
  zero locus of a single polynomial of degree $d'$, and, one has
  $d'<d$ by (\ref{eq:logHd:Pila:p}).

  Let us now consider the complement of $\Xi_s$ inside $X(\Z,H)$, and
  write it as a finite union of subsets $\Xi_P$ for each of our primes
  $p_i$ and each low multiplicity point $P$ on $X_{p_i}$, the
  reduction of $X$ modulo $p_i$. Here, low multiplicity means
  multiplicity at most $d/(\log H)^\alpha$. Let $\Delta$ be an
  interpolation determinant with monomials of degree $<d$ and points
  in $\Xi_P$.  Write $s$ for the number of monomials of degree $<d$ in
  $n$ variables.

  By Corollary 2.5 of
  \cite{CCDN-dgc} 
  we find, by homogenizing, that $\Delta$ is divisible by $p_i^e$ with
  \begin{equation}\label{eq:emu:bis}
    e \ge \left(\frac{(n-1)!}{\mu_P}\right)^{\frac{1}{n-1}}\frac{n-1}{n} s^{1+\frac{1}{n-1}} - as
  \end{equation}
  for some constant $a=a(n)$ and with $\mu_P$ the multiplicity of $P$
  on $C_{p_i}$. Hence, $\Delta$ must vanish by a similar computation
  as for the low multiplicity points in the proof of
  Proposition~\ref{prop:g}.  Now we are done by taking the union over
  all $p_i$ and all $P$ on $C_{p_i}$ of low multiplicity, similarly as
  for the proof of Proposition~\ref{prop:g}.
\end{proof}
\begin{Rem}\label{rem:highkappa}
Note that, compared to the proof of Theorem~\ref{thm:main}, there are
two further ingredients that lead to higher values of $\kappa$ for
larger $n$, namely, the exponent $2^{n-2}$ of $k$ in Proposition
\ref{lem:alpha:highdim:1}, and, the exponent $1/(n-1)$ of $\mu_P$ in
(\ref{eq:emu:bis}).
\end{Rem}

\begin{proof}[Proof of Theorem~\ref{thm:Pila:highdim;d2}]
  We may suppose that $n>2$ and
  \begin{equation}\label{eq:logHd:Pila}
    1\ll (\log H)^N < d <  H 
  \end{equation}
  for some constant $N=N(n)$ as in
  Proposition~\ref{prop:g:Pila}. 
  Indeed, the case $n=2$ follows from Corollary~\ref{cor:AQ}. If
  $d<(\log H)^N$ then the theorem follows from \cite[Proposition
  2.6]{CDHNV-dgc}, and, if $d>H$ then the theorem follows from
  $\A^n(\Z,H)\le c H^n$ for some constant $c$.  Now the theorem
  follows from Proposition~\ref{prop:g:Pila}, as follows. Let $g_i$ be
  the polynomials of degree $d-1$ provided by
  Proposition~\ref{prop:g:Pila}. Then each intersection $X_i$ of $X$
  with $g_i=0$ is of dimension $n-2$ and of degree at most
  $d(d-1)$. By Schwartz-Zippel for $X_i$ we find
$$
\# X_i(\Q,H) \le c d(d-1)H^{n-2}
$$
for some constant $c$.  By taking the union over the $X_i$ the theorem
follows.
\end{proof}


\section{Proofs 
  for general $K$ }\label{sec:globalproofs}

\subsection{Preliminaries for the global field
  case}\label{sec:prel}

We recall the set-up of Section 2.1 of \cite{Pared-Sas} and Section
4.1 of \cite{CDHNV-dgc}, almost
verbatim. 

We fix a global field $K$, i.e.\ a finite extension of $\Q$ or
${\mathbb{F}}_q(t)$ for some prime power $q$. If $K$ is a finite
extension of ${\mathbb{F}}_q(t)$ we moreover assume that $K$ is
separable over ${\mathbb{F}}_q(t)$ and that ${\mathbb{F}}_q$ is the
full field of constants of $K$.\footnote{Note that any global field of
  positive characteristic is a finite separable extension of a field
  isomorphic to $\FF_q(t)$ for some prime power $q$, by the existence
  of separating transcendental bases.} We denote by $d_K$ either
$[K:\Q]$ or $[K:{\mathbb{F}}_q(t)]$ depending on whether $K$ is an
extension of $\Q$ or of ${\mathbb{F}}_q(t)$.

Let us recall some basics of the theory of heights on $K$, based on \cite{Bomb-Gubl}, where we follow the same
normalizations as in~\cite{CDHNV-dgc} and \cite{Pared-Sas}. Assume
first that $K$ is a number field. Denote by $M_K$ the set of places
of $K$. The infinite places on $K$ come from
embeddings $\sigma: K\to \C$ which give a place $v$ via
\[
  |x|_v = |\sigma(x)|^{n_v/d_K},
\]
where $|\cdot |$ is the usual absolute value on $\R$ or $\C$, and
$n_v = 1$ if $\sigma(K)\subset \R$ and $n_v = 2$
otherwise. 
The finite places of $K$ correspond to non-zero prime ideals of the
ring of integers $\cO_K$ of $K$. For such a prime ideal $p$, we obtain a
place $v$ via
\[
  |x|_v = \cN_K(p)^{-\ord_p(x) / d_K},
\]
where $\cN_K(p) = \# \cO_K / p$ is the norm of $p$.

Now assume that $K$ is a function field. Any place of $K$ then
corresponds to a discrete valuation ring $\cO$ of $K$ containing
${\mathbb{F}}_q$ and whose fraction field is $K$. Let $p$ be the
maximal ideal of $\cO$. Then we define a place $v$
\[
  |x|_v = \cN_K(p)^{-\ord_p(x) / d_K},
\]
where as before $\cN_K(p) = \# \cO/p$. Denote by $M_K$ the set of places
of $K$. We also fix a place $v_\infty$ above the place in
${\mathbb{F}}_q(t)$ defined by $|f|_\infty = q^{\deg f}$.
The ring of integers $\cO_K$ of $K$ is then defined as the set of
$x\in K$ for which $|x|_v\leq 1$ for all places $v\neq v_\infty$.

Let $K$ be an arbitrary global field again (with notation as above). With the above
definitions, recall that we have a product formula, which states that
\[
  \prod_{v\in M_K} |x|_v = 1,
\]
for all $x\in K$. For a point $(x_0 : \ldots : x_n)\in \P^n(K)$ we
define the (absolute projective multiplicative) height to be
\begin{equation}\label{eq:abs:h}
  H(x) = \prod_{v\in M_K} \max_i |x_i|_v.
\end{equation}
This is well-defined because of the product formula. If $x\in K$ put
$H(x) = H(1:x)$.
For an integer $B\geq 1$ let $[B]_{\cO_K}$ be the elements
$x\in \cO_K$ such that
\[
  \begin{cases}
    \max_{\sigma: K\hookrightarrow \C} |\sigma(x) | \leq B^{1/d_K}, & \text{ if $K$ is a number field, and,} \\
    |x|_{v_\infty} \leq B^{1/d_K}, & \text{ if $K$ is a function field},
  \end{cases}
\]
and where the maximum is over embeddings of $K$ in $\C$.  If
$X\subset \A^n_K$ is an affine variety then we write
\begin{equation}\label{eq:cOK}
  X(\cO_K,B)
\end{equation}
for $X(K)\cap [B]_{\cO_K}^n$. These definitions ensure that if
$x\in \P^n(K)$ with $H^{d_K}(x)\leq B$, then there exists a point
$y = (y_1, \ldots, y_{n+1})$ in $\A^{n+1}(K)$ for which
$(y_1 : \ldots : y_{n+1}) = x$ and such that
$y\in [O_K(1) B]^{n+1}_{\cO_K}$ and such that moreover $(y_i\bmod p)$
is well-defined in $\P^n(\cO_K/p)$ for any prime ideal $p$ of a finite
place (namely, the associated place is not in $M_{K, \infty}$) with
$\cN_K(p)>c_2$ for some constant $c_2=c_2(K)$, see
\cite[Prop.\,2.2]{Pared-Sas}.  Hence, for such $p$, $x$ and $y$, we
can define the reduction of $x$ modulo $p$ by $y\bmod p$ in
$\P^n(\cO_K/p)$.

\subsection{Proof of Theorem~\ref{thm:main} for general $K$}

  We explain the adaptations compared to the case $K=\Q$ treated above.

  In Step 1, one notes that the proof of \cite[Proposition
  4.3.2]{CCDN-dgc} goes through in any characteristic, see the
  discussion above Corollary 2.2 of \cite{CDHNV-dgc}.  Instead of
  \cite[Theorem 2]{CCDN-dgc}, one uses \cite[Theorem 1.8]{Pared-Sas}
  (which has a higher power of $d$ when $K$ is of positive
  characteristic, namely, an extra factor $d^4$ compared to
  (\ref{eq:d4})). With these changes to the above Step 1, it is clear that we may assume that $n=2$ and
  \begin{equation}\label{eq:logHd:K}
    1\ll_K (\log H)^2 < d <  H^{3/2}.  
  \end{equation}
  Thus, again by B\'ezout, we may focus on the following analogue of
  Proposition~\ref{prop:g} for general $K$ in order to finish the proof of Theorem~\ref{thm:main} for general $K$.
  \begin{Prop}\label{prop:g:K}
    With any $K$, $C$, $d$, and $H$ as in Theorem~\ref{thm:main},
    $\kappa=12$, and under the extra assumptions $n=2$ and
    (\ref{eq:logHd:K}), there exist no more than $c(\log H)^\kappa$
    many polynomials $g_i$ of degree $d-1$ whose joint zero locus
    contains $C(K,H)$, for some constant $c=c(K)$ depending only on
    $K$.
  \end{Prop}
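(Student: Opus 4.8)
The plan is to follow the three-step structure of the proof of Proposition~\ref{prop:g} essentially verbatim, making two substitutions: rational primes $p$ become prime ideals $\mathfrak{p}$ of $\cO_K$ (equivalently, finite places $v\notin M_{K,\infty}$), and divisibility statements about rational integers become applications of the product formula for $K$ recalled in Section~\ref{sec:prel}. Fix $C$, $d$, $H$ as in the statement, satisfying~(\ref{eq:logHd:K}). By the discussion around~(\ref{eq:cOK}), each $x\in C(K,H)$ admits a representative $y=(y_0,y_1,y_2)\in\cO_K^3$ with $\max_i|\sigma(y_i)|\le O_K(1)H$ for every embedding $\sigma\colon K\hookrightarrow\C$ (resp.\ $|y_i|_{v_\infty}\le O_K(1)H$ in the function field case), such that moreover $y\bmod\mathfrak{p}$ is a well-defined point of $\P^2(\cO_K/\mathfrak{p})$ for every prime ideal $\mathfrak{p}$ with $\cN_K(\mathfrak{p})>c_2(K)$; fix one such $y$ for each $x$. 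For such $\mathfrak{p}$ let $C_{\mathfrak{p}}\subset\P^2_{\cO_K/\mathfrak{p}}$ be the reduction of $C$ modulo $\mathfrak{p}$, a curve of degree $\le d$, and for $P\in C_{\mathfrak{p}}(\cO_K/\mathfrak{p})$ write $\mu_P=\mult_P(C_{\mathfrak{p}})$. Crucially, Proposition~\ref{lem:alpha} holds over an arbitrary field, and Corollary~2.5 of~\cite{CCDN-dgc} holds over any local field, so both apply with residue field $\cO_K/\mathfrak{p}$ of arbitrary characteristic.

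The analogue of Step~2 runs as follows. Given $\mathfrak{p}$ with $\cN_K(\mathfrak{p})>\log H$, a point $P\in C_{\mathfrak{p}}(\cO_K/\mathfrak{p})$, and a set $\Xi\subset C(K,H)$ whose chosen representatives all reduce to $P$ modulo $\mathfrak{p}$, form the interpolation determinant $\Delta$ of the $s={d+1\choose 2}$ monomials of degree $d-1$ evaluated at representatives of points of $\Xi$. Since the $y_i$ lie in $\cO_K$, one has $\Delta\in\cO_K$, hence $|\Delta|_w\le 1$ at every finite place $w$, while Corollary~2.5 of~\cite{CCDN-dgc} applied over the completion $K_v$ at the place $v$ above $\mathfrak{p}$ gives $\ord_{\mathfrak{p}}(\Delta)\ge e$ with $e\ge\frac{s^2}{2\mu_P}-as$ for some $a=a(K)$, i.e.\ $|\Delta|_v\le\cN_K(\mathfrak{p})^{-e/d_K}$. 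At the places above infinity the size bound on the $y_i$, together with the normalizations of Section~\ref{sec:prel}, yields $\prod_{w\mid\infty}|\Delta|_w\le s!\,(O_K(1)H)^{(d-1)s}\le H^{2d^3}$ once~(\ref{eq:logHd:K}) holds (the factorial is absorbed since $\log d<2\log H$). If $\Delta\ne0$, the product formula gives $1=\prod_w|\Delta|_w\le\cN_K(\mathfrak{p})^{-e/d_K}\cdot H^{2d^3}$, hence $\cN_K(\mathfrak{p})^{e/d_K}\le H^{2d^3}$; but if $\mu_P<d/\log H$ then $e\ge a'\log H\cdot d^3$ as in~(\ref{eq:e:log}), and $\cN_K(\mathfrak{p})>\log H$ forces $H^{a'(\log\log H)d^3/d_K}\le H^{2d^3}$, a contradiction for $H$ large. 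So for every low-multiplicity $P$ there is a degree $d-1$ polynomial vanishing on the points of $C(K,H)$ that reduce to $P$ modulo $\mathfrak{p}$.

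For Step~3, set $d'=\lfloor N\log H\rfloor$ with $N$ from Proposition~\ref{lem:alpha}, and take all prime ideals $\mathfrak{p}_i$ of $\cO_K$ with $\log H<\cN_K(\mathfrak{p}_i)<M(\log H)^4$ for a large $M=M(K)$. By Landau's prime ideal theorem (and, in the function field case, its analogue coming from the rationality of the zeta function of the smooth projective curve attached to $K$) there are such ideals, their number being $O_K((\log H)^4/\log\log H)$, all of norm $>c_2(K)$ once $H$ is large, and one has $\sum_{\cN_K(\mathfrak{p})<T}\log\cN_K(\mathfrak{p})\asymp T$, the analogue of~(\ref{eq:T}). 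Let $\Xi_s\subset C(K,H)$ be the set of points whose reduction modulo every $\mathfrak{p}_i$ has multiplicity $\ge d/\log H$. By Proposition~\ref{lem:alpha} the reductions of $\Xi_s$ modulo each $\mathfrak{p}_i$ lie on a curve of degree $\le d'$ in $\P^2_{\cO_K/\mathfrak{p}_i}$, so any interpolation determinant $\Delta_{\Xi_s,d'}$ with monomials of degree $d'$ and points of $\Xi_s$ satisfies $|\Delta_{\Xi_s,d'}|_{v_i}\le\cN_K(\mathfrak{p}_i)^{-1/d_K}$ at the place $v_i$ above $\mathfrak{p}_i$. Combining this over all $\mathfrak{p}_i$ with the archimedean bound $\prod_{w\mid\infty}|\Delta_{\Xi_s,d'}|_w\le H^{c''(\log H)^3}$ via the product formula gives $1\le H^{c''(\log H)^3}\exp(-\frac{c_1M}{2d_K}(\log H)^4)$, which is $<1$ for $M=M(K)$ and $H$ large; hence $\Delta_{\Xi_s,d'}=0$, so $\Xi_s$ lies on a single curve of degree $d'<d$. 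The complement of $\Xi_s$ in $C(K,H)$ is then covered, using the previous paragraph, by one degree $d-1$ polynomial for each pair $(\mathfrak{p}_i,P)$ with $P$ a low-multiplicity point of $C_{\mathfrak{p}_i}$; there are $\ll_K(\log H)^4/\log\log H$ primes $\mathfrak{p}_i$ and $\ll_K(\log H)^8$ points $P$ for each (bounding by $\#\P^2(\cO_K/\mathfrak{p}_i)$), so in total $\ll_K(\log H)^{12}$ polynomials, giving $\kappa=12$.

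I expect the main obstacle to be the bookkeeping in the product-formula form of the determinant estimate: one must check that Corollary~2.5 of~\cite{CCDN-dgc} applies verbatim at the non-archimedean place $v\mid\mathfrak{p}$ (it is a local statement, so this should be routine), and, in the positive-characteristic case, that the range $[\log H,\,M(\log H)^4]$ contains enough places of norm in that interval despite norms being powers of $q$ — both hold once $M=M(K)$ and $H$ are large, but they are the points that require care.
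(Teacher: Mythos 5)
Your proposal is correct and follows essentially the same route as the paper: prime ideals in place of rational primes, the same two-tier treatment of low- and high-multiplicity reduction points, Proposition~\ref{lem:alpha} for the high-multiplicity locus, and Landau's prime ideal theorem (resp.\ the function-field analogue) for the supply of primes; your product-formula bookkeeping is just an equivalent rewriting of the paper's bound on $\cN_K(\Delta)$ via (2-3) and (2-5) of \cite{Pared-Sas}. The one point you flag as needing care --- that the $\mathfrak{p}$-adic determinant estimate of Corollary 2.5 of \cite{CCDN-dgc} applies at the completion $K_v$ --- is exactly what the paper resolves by citing Lemma 3.14 of \cite{Pared-Sas} (and Remark 3.15 there for the reduction $C_{\mathfrak{p}}$ remaining a curve of degree at most $d$), so nothing further is missing.
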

  This concludes the adaptation of Step
  1. 

Also Step 2 goes through with small natural adaptations, using prime
  ideals instead of prime numbers, and Lemma 3.14 of
  \cite{Pared-Sas} instead of Corollary 2.5 of \cite{CCDN-dgc}.

Let us give the details for this adaptation of step 2 from $K=\QQ$ to general $K$. Constants are allowed to depend on $K$ without notice.
Take any prime ideal $p$ of $\cO_K$ with $\cN_K(p) > \log
H$, and let $C_p$ be the scheme-theoretic reduction of $C$ modulo $p$
inside $\P^2_{\mathbb{F}_p}$, where $\mathbb{F}_p$ stands for the finite field $\cO_K/p$. By \cite[Remark 3.15]{Pared-Sas}, we may assume that $C_p$ is still a curve, of degree at
most $d$. Consider a point $P$ on $C_p(\FF_p)$ and write $\mu_P$ for
$\mult_P(C_p)$, the multiplicity of $P$ on $C_p$. Let $\Xi$ be a
collection of points in $C(\Q,H)$ such that each point in $\Xi$ has
some primitive integer tuple of homogeneous coordinates which reduces
to $P$ modulo $p$, as explained at the end of Section \ref{sec:prel}.  Form an interpolation
determinant $\Delta$, namely, the determinant of a matrix
$(m_i(\xi_j))_{i,j}$ consisting of all monomials $m_i$ of degree $d-1$
in $x_0,x_1,x_2$ evaluated in some points $\xi_j$ of $\Xi$, or more
precisely, evaluated in a tuple
$(\xi_{j1},\xi_{j2},\xi_{j3})$ of homogeneous coordinates in $[O_K(1) H]^{n+1}_{\cO_K}$ of $\xi_j$,
for some $\xi_j$ in $\Xi$, as explained at the end of Section \ref{sec:prel}. This matrix is of dimension $s \times s$
with $s=d(d+1)/2$. 
Hence, from
(2-3) and (2-5) of \cite{Pared-Sas} and (\ref{eq:logHd:K}), we find
$$
\cN_K(\Delta)  \ll (d^2! H^{d^3})^{d_K} \ll d^{d_K2d^2} H^{d_Kd^3}
\ll
H^{cd^3}
$$
for some constant $c$, where $\cN_K(x)$ for nonzero $x$ in $\cO_K$ is $\#\cO_K/(x)$ and $\cN_K(0)=0$.
By Lemma 3.14 of
  \cite{Pared-Sas},
we find that $\Delta$ is divisible by $p^e$ with
\begin{equation}\label{eq:emu:K}
  e \ge \frac{s^2}{2\mu_P} - as
\end{equation}
for some constant $a$.
 Let us assume that
\begin{equation}\label{eq:muP:K}
  \mu_P  
  < \frac{d}
  {\log H},
\end{equation}
so that 
(\ref{eq:logHd:K}) and (\ref{eq:emu:K}) imply
\begin{equation}\label{eq:e:log:K}
  e 
  \ge a'  \log H\cdot  d^3 
\end{equation}
for some constant $a'>0$.
If $\Delta$ is nonzero, we find 
$$
H^{a' (\log(\log H)) d^3} = (\log H)^{a' (\log H) d^3} < \cN_K(p)^{a' (\log H) d^3} \le \cN_K(p)^{e} \le\cN_K(\Delta) \ll H^{cd^3},
$$
which clearly gives a contradiction
with 
(\ref{eq:logHd:K}). We get the following conclusion.  Assume $n=2$ and (\ref{eq:logHd:K}). Let
  $p$ be a prime ideal of $\cO_K$ of norm $\cN_K(p)>\log H$ and let $P$
  be a point on $C_p(\FF_p)$ with $\FF_p:=\cO_K/p$ and $C_p$ the scheme
  theoretic reduction of $C$ modulo $p$. Let $\Xi$ be the points in $C(K,H)$
  whose reduction modulo $p$ equals $P$ (see the end of Section \ref{sec:prel}). Write $\mu_P$ for the
  multiplicity of $P$ on
  $C_p$.  
  Then, $\mu_P < d/ 
  \log H$ implies that there is a homogeneous polynomial $g$ of degree
  $d-1$ vanishing on all points in $\Xi$.

  For the adaptation of the proof of Proposition~\ref{prop:g} to a
  proof of Proposition~\ref{prop:g:K}, one reasons by the Landau prime
  ideal theorem, resp.~the Riemann hypothesis over function fields,
  instead of the prime number theorem when $K$ has characteristic
  zero, resp.~positive characteristic, (see Section 2D of
  \cite{Pared-Sas}), to find sufficiently many primes to make the
  argument work as in the case $K=\Q$.  Note that
  Proposition~\ref{lem:alpha} is already formulated in the correct
  generality above and is used for the curves of the form $C_p$. Let us give the details.

  \begin{proof}[Proof of Proposition~\ref{prop:g:K}]
  Put $d':= \lfloor N \log H
  \rfloor $ with constant $N$ from Proposition~\ref{lem:alpha}.  
  Consider all prime ideals $p_i$ with $\cN_K(p_i)$ between $\log H$ and $M(\log H)^4$ for
  some large constant $M$. 
  Let $\Xi_s$ be the set of points on $C(K,H)$ whose reduction modulo
  $p_i$ has multiplicity at least $d/\log H$ for each of our primes
  $p_i$. Note that we may assume that $C$ is given by a polynomial $f$ over $\cO_K$ and that the reduction of $f$ modulo $p_i$ is
  of degree $D_i\le d$ for some $D_i>0$, by \cite[Remark 3.15]{Pared-Sas}. Any
  interpolation determinant $\Delta_{\Xi_s,d'}$ with monomials of
  degree $d'$ and points from $\Xi_s$ vanishes modulo all the chosen
  prime ideals $p_i$, by Proposition~\ref{lem:alpha}, and where an interpolation determinant is as explained in Step 2.  Hence, if
  $\Delta_{\Xi_s,d'}$ is nonzero we have
  \begin{equation}\label{eq:d'K}
     H^{c'M(\log H)^{3}} = e^{c'M(\log H)^4 } < \prod_{\log H < \cN_K(p_i) < M(\log H)^4 } \cN_K(p_i) \le \cN_K(\Delta_{\Xi_s,d'}),
  \end{equation}
  for some constant $c'$, where $\cN_K(x)$ for nonzero $x$ in $\cO_K$ is $\#\cO_K/(x)$ and where we have used that there are positive
  constants $c_1$ and $c_2$ with
  \begin{equation}\label{eq:TK}
c_1T< \sum_{\cN_K(p)<T} \log \cN_K(p) < c_2T,
  \end{equation}
  for large $T$, where the sum is over primes smaller than $T$, and
  where (\ref{eq:TK}) follows from the Landau prime
  ideal theorem, resp.~the Riemann hypothesis over function fields,
  when $K$ has characteristic  zero, resp.~positive characteristic, see (2-13) of Section 2D of
  \cite{Pared-Sas}.
  On the other hand, using (2-3) and (2-5) of \cite{Pared-Sas} and (\ref{eq:logHd:K}) we find that
  \begin{equation}\label{eq:d''K}
    \cN_K(\Delta_{\Xi_s,d'}) \ll (d'{}^{2}!)^{d_K} H^{d_Kd'{}^3} \ll d'{}^{2d_Kd'{}^{2}} H^{d_Kd'{}^3}    < H^{2d_Kd'{}^3} \le H^{c''\log H ^3} ,
  \end{equation}
for some constant $c''$.
  By (\ref{eq:d'K}), (\ref{eq:d''K}), and by taking $M$ large enough,
  $\Delta_{\Xi_s,d'}$ must vanish as soon $H$ is large enough (which
  holds by (\ref{eq:logHd:K})). 
  Hence, there is a polynomial $g_0$ of degree $d'$ that vanishes on
  $\Xi_s$. Observe that $d'<d$ by (\ref{eq:logHd:K}).

For the complement of $\Xi_s$ in $C(\Q,H)$ we use the above adaptation of Step 2 to general $K$,
  as follows. 
  For each prime ideal $p_i$ with $\log H < \cN_K(p_i) < M(\log H)^4 $, and each
  point $P$ on $C_{p_i}(\mathbb{F}_{p_i})$ of multiplicity at most
  $d/\log H$, there is a polynomial of degree $d-1$ that vanishes on
  the subset of $C(\Q,H)$ consisting of points which reduce to $P$
  modulo $p_i$.
  There clearly are no more than $c(\log H)^4$ many
  such prime ideals $p_i$, for some constant $c$ (by the Landau prime ideal theorem for number fields, resp.~the Riemann hypothesis over function fields, see Section 2D of
  \cite{Pared-Sas}), and for each such $p_i$ there are no more than
  $c'(\log H)^{8}$ many points on $C_{p_i}(\mathbb{F}_{p_i})$ for some $c'$.  Hence,
  in total there are no more than
$c(\log H)^{12}$
many polynomials, as desired for Proposition~\ref{prop:g:K}, for some
constant $c$.
This concludes the proof of Proposition~\ref{prop:g:K} and of Theorem~\ref{thm:main} for general $K$. 
\end{proof}

We obtain this affine variant of Theorem~\ref{thm:main}, with a
similar proof as for Corollary~\ref{cor:AQ}.

\begin{Cor}[Affine curves]\label{cor:AK}
  Let $K$ 
  be as above, and let $n>1$ be given. There is a constant $c=c(K,n)$
  and an absolute constant $\kappa$ such that for any $d>1$, any
  irreducible algebraic curve $C$ of degree $d$ in $\A^n_K$ and any
  $H>2$ one has
  \begin{equation}\label{eq:Sal:log:AO}
    \# C(\cO_K,H) \le c d^{2}H^{\frac{1}{d}}(\log H)^\kappa.
  \end{equation}
\end{Cor}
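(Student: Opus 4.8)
The plan is to follow the proof of Corollary~\ref{cor:AQ}, replacing the inputs for $\Q$ by their counterparts over a general global field $K$. First I would carry out the same reductions as in Step~1 of the proof of Theorem~\ref{thm:main} for general $K$ in Section~\ref{sec:globalproofs}: a birational projection to $\A^2_K$ (the affine analogue of \cite[Proposition~4.3.2]{CCDN-dgc}, valid in any characteristic) reduces us to $n=2$; the small-degree range $d\le(\log H)^2$ is handled by the affine curve bound over $K$ from \cite{Pared-Sas} (the affine analogue of \cite[Theorem~1.8]{Pared-Sas}), which has the shape $c\,d^{b}H^{1/d}(\log H)^{b'}$ with $b\ge 2$ and $b'$ absolute constants, hence is of the asserted form when $d\le(\log H)^2$ provided $\kappa$ is taken large enough; and for $d$ beyond a suitable power of $H$ the trivial bound $\#C(\cO_K,H)\le c_KH^n$ suffices. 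Thus we may assume $n=2$ together with the analogue of \eqref{eq:logHd:K}.

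Next I would pass to the projective closure $\bar C\subset\P^2_K$ of $C$, again an irreducible plane curve of degree $d$. The only ingredient not already present in the case $K=\Q$ is the elementary height comparison $\#C(\cO_K,H)\le\#\bar C(K,H^{1/d_K})$: a point of $[H]_{\cO_K}^n$ has all its coordinates of absolute value $\le 1$ at the non-archimedean places (resp.\ at every place $\neq v_\infty$, in the function field case), while the archimedean places (resp.\ $v_\infty$) contribute a factor $\le H^{1/d_K}$ in total to the maximum of the coordinate absolute values, so via $x\mapsto(1:x_1:\dots:x_n)$ such a point gives a point of $\P^n(K)$ of height $\le H^{1/d_K}$; this is the affine-to-projective direction of the comparison recorded at the end of Section~\ref{sec:prel}. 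Applying Theorem~\ref{thm:main} for general $K$ to $\bar C$ at height $H^{1/d_K}$ (legitimate once $H^{1/d_K}>2$), and using $(H^{1/d_K})^{2d_K/d}=H^{2/d}$ and $\log(H^{1/d_K})\le\log H$, then gives
\begin{equation*}
  \#C(\cO_K,H)\ \le\ \#\bar C\bigl(K,H^{1/d_K}\bigr)\ \le\ c\,d^2\,H^{2/d}(\log H)^\kappa .
\end{equation*}

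Finally, in the regime under consideration one has $d>(\log H)^2>\log H$, so $H^{1/d}<H^{1/\log H}=e$ and therefore $H^{2/d}=H^{1/d}\cdot H^{1/d}\le e\,H^{1/d}$; absorbing the factor $e$ into the constant gives \eqref{eq:Sal:log:AO}. I do not expect a genuine obstacle, the argument being essentially the one used for Corollary~\ref{cor:AQ}; the only points needing a little care are the bookkeeping of the powers of $\log H$, so that one absolute exponent $\kappa$ works both in the small-degree regime and in the regime handled by Theorem~\ref{thm:main}, and the harmless edge case $H^{1/d_K}\le 2$, which forces $H$ to be bounded in terms of $K$ and is then absorbed into the constant via $\#C(\cO_K,H)\le c_KH^2$.
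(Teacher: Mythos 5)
Your argument is correct and follows essentially the same route as the paper: the same Step‑1 reductions (birational projection to the plane, known polynomial-in-$d$ bounds for $d\le(\log H)^2$, trivial bounds for huge $d$), then passing to the projective closure and applying Theorem~\ref{thm:main}, using that $H^{2/d}$ is $O(H^{1/d})$ once $d>\log H$. The extra details you supply (the height comparison $\#C(\cO_K,H)\le\#\bar C(K,H^{1/d_K})$ and the bookkeeping of $\log H$ powers) are exactly the ones the paper leaves implicit when it says the proof is "similar to that of Corollary~\ref{cor:AQ}".
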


\subsection{A variant of Theorem~\ref{thm:Pila:highdim;d2} for global
  fields $K$}

In this section we prove the following generalization of Theorem
\ref{thm:Pila:highdim;d2} for any global field $K$.
\begin{Thm}\label{thm:main:Pila:d2K}
  Let $K$ be 
  as above and let $n>1$ be given. There are constants $c=c(K,n)$ and
  $\kappa=\kappa(n)$ such that for any $d\ge 1$, any irreducible
  hypersurface $X\subset \A_K^n$ of degree $d$ and any $H>2$, one has
  \begin{equation}\label{eq:dg:aff}
    \# X(\cO_K,H) \le c d^{2}H^{\dim X-1+1/d}(\log H)^\kappa.
  \end{equation}
\end{Thm}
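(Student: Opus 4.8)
The plan is to reproduce the proof of Theorem~\ref{thm:Pila:highdim;d2}, replacing the determinant‐method estimates over $\Z$ by their global‐field counterparts already used in the proof of Proposition~\ref{prop:g:K}. As in all those proofs, the content concentrates in a single statement: with $X\subset\A^n_K$, $d$, $H$ as in Theorem~\ref{thm:main:Pila:d2K}, $n>2$, and in the range
\[
  1\ll_{K,n}(\log H)^N < d < H
\]
for a suitable $N=N(n)$, there exist no more than $c(\log H)^\kappa$ polynomials $g_i$ of degree $d-1$, with $c=c(K,n)$ and $\kappa=\kappa(n)$, whose joint zero locus contains $X(\cO_K,H)$. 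Granting this, the theorem follows exactly as Theorem~\ref{thm:Pila:highdim;d2} follows from Proposition~\ref{prop:g:Pila}: since $\deg g_i<d=\deg X$ and $X$ is irreducible, $g_i$ is not a multiple of the defining polynomial of $X$, so $X_i:=X\cap\{g_i=0\}$ has dimension at most $\dim X-1=n-2$ and degree at most $d(d-1)$, and Schwartz--Zippel over $K$ gives $\#X_i(\cO_K,H)\le c\,d^2H^{n-2}$; summing over the $i$ yields $\#X(\cO_K,H)\le c\,d^2H^{n-2}(\log H)^\kappa\le c\,d^2H^{\dim X-1+1/d}(\log H)^\kappa$.

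Before invoking the key proposition I would dispose of the remaining parameter ranges. The case $n=2$ is Corollary~\ref{cor:AK} (together with the trivial bound for $d=1$). When $d<(\log H)^N$ the estimate follows from the known affine dimension growth results over global fields with a polynomial dependence on $d$ (see \cite[\S4]{CDHNV-dgc} and \cite{Pared-Sas}), a factor $d^{e(n)}$ being absorbed into a larger power of $\log H$ since $d<(\log H)^N$. When $d\ge H$, a coordinate projection exhibits $X(\cO_K,H)$ as contained in a union of at most $c(K,n)H^{n-1}$ fibres, each of size at most $d$, so $\#X(\cO_K,H)\le c\,d\,H^{n-1}\le c\,d^2H^{n-2}$ using $H\le d$, which lies within the claimed bound. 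One is thus reduced to the displayed window of $d$, which is exactly what the key proposition covers.

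For the proposition itself I would copy the proof of Proposition~\ref{prop:g:Pila} with the substitutions already made in passing from Proposition~\ref{prop:g} to Proposition~\ref{prop:g:K}. One runs over prime ideals $\mathfrak p$ of $\cO_K$ with $\cN_K(\mathfrak p)$ between $\log H$ and $M(\log H)^\ell$ for large $M=M(n)$, $\ell=\ell(n)$; by the Landau prime ideal theorem (in characteristic $0$), resp.\ the Riemann hypothesis for function fields (in positive characteristic), as recalled in \cite[\S2D]{Pared-Sas}, there are enough of them and their norms multiply up as in~(\ref{eq:TK}). By \cite[Remark~3.15]{Pared-Sas} one may assume the reduction $X_{\mathfrak p}$ of $X$ modulo $\mathfrak p$ is a hypersurface of degree $\le d$. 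Splitting $X(\cO_K,H)$ into the set $\Xi_s$ of points of multiplicity $\ge d/(\log H)^\alpha$ modulo every $\mathfrak p$, and, for each $\mathfrak p$, the points lying over a low-multiplicity point $P$ of $X_{\mathfrak p}$: on $\Xi_s$ one applies Proposition~\ref{lem:alpha:highdim:1} — valid over the finite field $\cO_K/\mathfrak p$ since it is stated for arbitrary $F$ — to see that any interpolation determinant of degree $d'=(\log H)^C$ on $\Xi_s$ is divisible by all the $\mathfrak p$, whereas $\cN_K$ of that determinant is bounded via the height estimates (2-3) and (2-5) of \cite{Pared-Sas}; comparing forces it to vanish, producing one polynomial of degree $d'<d$ through $\Xi_s$. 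For the low-multiplicity points one homogenizes and uses Lemma~3.14 of \cite{Pared-Sas} in place of \cite[Corollary~2.5]{CCDN-dgc} to obtain the divisibility exponent~(\ref{eq:emu:bis}) up to $d_K$-dependent constants, and the same comparison of $\cN_K(\Delta)$ with $\cN_K(\mathfrak p)^e$ forces $\Delta=0$; taking the union over all $\mathfrak p$ and all low-multiplicity $P$ produces the remaining $O((\log H)^\kappa)$ polynomials.

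The main obstacle is bookkeeping rather than conceptual. One must check that, with the extra $d_K$-th powers entering through the normalization of heights and through $\cN_K$ on interpolation determinants, and with the higher-dimensional divisibility exponent~(\ref{eq:emu:bis}) (whose $\mu_P$-exponent is $1/(n-1)$), the inequalities still collapse in the chosen range of $d$: that the determinant norm of order $H^{O(d^{\dim})}$ is beaten by the product of the $\cN_K(\mathfrak p)^e$, exactly as in the proofs of Propositions~\ref{prop:g:Pila} and~\ref{prop:g:K}. Once the window $(\log H)^N<d<H$ and the parameters $\alpha,C,\ell,M$ are chosen compatibly with the exponents $2^{n-2}$ (from Proposition~\ref{lem:alpha:highdim:1}) and $1/(n-1)$ (from~(\ref{eq:emu:bis})), everything goes through as before; tracking these exponents also pins down an admissible $\kappa=\kappa(n)$, in the spirit of Remark~\ref{rem:highkappa}.
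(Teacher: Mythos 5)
Your proposal is correct and follows essentially the same route as the paper: reduce to a global-field analogue of Proposition~\ref{prop:g:Pila} (this is exactly the paper's Proposition~\ref{prop:g:Pila:K}), prove that proposition by running the determinant method over prime ideals $\mathfrak p$ of $\cO_K$ with the Landau prime ideal theorem / function-field Riemann hypothesis, Proposition~\ref{lem:alpha:highdim:1}, and Lemma 3.14 of Pared--Sas in place of Corollary~2.5 of CCDN-dgc, and then conclude via B\'ezout and Schwartz--Zippel as in Theorem~\ref{thm:Pila:highdim;d2}. Your treatment of the edge cases ($n=2$ via Corollary~\ref{cor:AK}, $d<(\log H)^N$ via polynomial-in-$d$ dimension growth, $d\ge H$ by fibering) is a bit more explicit than the paper's one-line citations but amounts to the same thing.
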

In order to show Theorem~\ref{thm:main:Pila:d2K}, we show the
following analogue of Proposition~\ref{prop:g:Pila}.
\begin{Prop}\label{prop:g:Pila:K}
  With $K$, $X$, $d$, $n$ and $H$ as in
  Theorem~\ref{thm:main:Pila:d2K} and with the extra assumptions
  that 
  \begin{equation}\label{eq:logHd:Pila:p:2}
    1\ll (\log H)^N < d <  H 
  \end{equation}
  for some constant $N=N(n)\gg_n 1$, there exist no more than
  $c(\log H)^\kappa$ many polynomials $g_i$ of degree $d-1$ whose
  joint zero locus contains $X(\cO_K,H)$, for some constants
  $c=c(K,n)$ and $\kappa=\kappa(n)$.
\end{Prop}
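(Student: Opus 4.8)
The plan is to transplant the two-regime interpolation-determinant argument of the proof of Proposition~\ref{prop:g:Pila} into the global-field setting, using exactly the height bookkeeping already carried out above in the proof of Proposition~\ref{prop:g:K}; no new idea should be required, and constants may depend on $K$ and $n$ without mention. First I would reduce to $n\ge 3$, since for $n=2$ the hypersurface $X$ is a curve and Corollary~\ref{cor:AK} already gives the bound. By \cite[Remark~3.15]{Pared-Sas} we may assume $X$ is cut out over $\cO_K$ by a polynomial whose reduction modulo a prime ideal $p$ of $\cO_K$ still defines a hypersurface, of some degree $D_p\le d$, provided $\cN_K(p)$ is large; for points of $X(\cO_K,H)$ we fix homogeneous coordinates in $[O_K(1)H]^{n+1}_{\cO_K}$ and reduce them modulo $p$ as recalled at the end of Section~\ref{sec:prel}. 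Then I would fix the primes: let $p_i$ run over the prime ideals of $\cO_K$ with $\log H<\cN_K(p_i)<M(\log H)^\ell$ for suitable $M=M(n)$ and $\ell=\ell(n)$; by the Landau prime ideal theorem in characteristic zero, resp.\ the Riemann hypothesis for function fields in positive characteristic --- the estimate~(\ref{eq:TK}) playing the role of the prime number theorem --- there are $\asymp(\log H)^\ell/\log\log H$ of them and $\prod_i\cN_K(p_i)\ge e^{c_1 M(\log H)^\ell}$ for $M$ large.

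Next I would split $X(\cO_K,H)$ into $\Xi_s$, the points whose reduction modulo \emph{every} $p_i$ has multiplicity $>d/(\log H)^\alpha$ for a large $\alpha=\alpha(n)$, and its complement. For $\Xi_s$, Proposition~\ref{lem:alpha:highdim:1} applied over $\FF_{p_i}:=\cO_K/p_i$ to each homogenized reduction $X_{p_i}$ places the reduction of $\Xi_s$ modulo each $p_i$ inside a hypersurface over $\FF_{p_i}$ of degree $\le c(\log H)^{\alpha 2^{\,n-2}}=:d'\le(\log H)^C$ with $C=C(n)$; hence any interpolation determinant $\Delta_{\Xi_s,d'}$ in the monomials of degree $d'$ evaluated at points of $\Xi_s$ vanishes modulo every $p_i$, so $\prod_i p_i\mid(\Delta_{\Xi_s,d'})$. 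Comparing $\cN_K(\Delta_{\Xi_s,d'})$, which is $\le\big(d'{}^{O_n(1)}!\,H^{d'{}^{O_n(1)}}\big)^{d_K}\ll H^{O_n((\log H)^{O_n(1)})}$ by (2-3) and (2-5) of \cite{Pared-Sas}, with the lower bound $e^{c_1 M(\log H)^\ell}=H^{c_1 M(\log H)^{\ell-1}}$, one sees $\Delta_{\Xi_s,d'}=0$ once $\ell$, then $M$, are large and $H$ is large (which holds by~(\ref{eq:logHd:Pila:p:2})). So $\Xi_s$ lies in the zero set of one polynomial of degree $d'<d$, the last inequality by~(\ref{eq:logHd:Pila:p:2}) with $N>C$.

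For the complement I would write it as the union, over all $p_i$ and all points $P$ of $X_{p_i}$ of multiplicity $\mu_P\le d/(\log H)^\alpha$, of the sets $\Xi_P$ of $\cO_K$-points reducing to $P$. For an interpolation determinant $\Delta$ in the monomials of degree $<d$ on the points of $\Xi_P$, homogenizing and invoking \cite[Lemma~3.14]{Pared-Sas} gives $p_i^e\mid(\Delta)$ with
\[
  e\ \ge\ \Big(\tfrac{(n-1)!}{\mu_P}\Big)^{\!1/(n-1)}\tfrac{n-1}{n}\,s^{\,1+1/(n-1)}-as,\qquad s:=\#\{\text{monomials of degree }<d\text{ in }n\text{ variables}\},
\]
$a=a(n)$; since $\mu_P\le d/(\log H)^\alpha$ and $s\asymp d^n$, this is $\gg(\log H)^{\alpha/(n-1)}d^{n+1}$, so $\cN_K(p_i)^e\ge H^{(\log\log H)(\log H)^{\alpha/(n-1)}d^{n+1}}$, which exceeds $\cN_K(\Delta)\le(s!\,H^{sd})^{d_K}\ll H^{O_n(d^{n+1})}$ (using $d<H$), forcing $\Delta=0$ and hence a polynomial of degree $\le d-1$ vanishing on $\Xi_P$. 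Adding the single polynomial for $\Xi_s$ and taking the union over the $\asymp(\log H)^\ell/\log\log H$ ideals $p_i$ and the $\le\cN_K(p_i)^{\,n}\ll(\log H)^{n\ell}$ points $P$ of each $X_{p_i}$ would yield $\le c(\log H)^\kappa$ polynomials of degree $\le d-1$ whose common zero set contains $X(\cO_K,H)$, with $\kappa=\kappa(n)$.

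The main point to watch, rather than a genuine obstacle, is the positive-characteristic case: the derivative/generic-projection construction inside the proof of Proposition~\ref{lem:alpha:highdim} (on which Proposition~\ref{lem:alpha:highdim:1} rests) needs the reduced equation modulo $p_i$ not to be a $p$-th power, and the heights must be normalized with the factors $n_v/d_K$ of Section~\ref{sec:prel}. Both are already handled: the former because Propositions~\ref{lem:alpha:highdim:1} and~\ref{lem:alpha:highdim} were stated over an arbitrary field, the latter because the passage runs through the same norm estimates ((2-3) and (2-5) of \cite{Pared-Sas}) and prime-counting ((\ref{eq:TK})) already used in the proof of Proposition~\ref{prop:g:K}. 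Beyond these substitutions the argument is identical to that of Proposition~\ref{prop:g:Pila}.
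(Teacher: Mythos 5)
Your proposal is correct and follows essentially the same route as the paper, which proves this proposition precisely by transplanting the argument of Proposition~\ref{prop:g:Pila} into the global setting via the adaptations already made for Proposition~\ref{prop:g:K}, with Lemma~3.14 of \cite{Pared-Sas} replacing Corollary~2.5 of \cite{CCDN-dgc}. The only blemish is a harmless typo in the low-multiplicity step: $\cN_K(p_i)^e$ with $\cN_K(p_i)>\log H$ gives $H$ to the exponent $(\log\log H)(\log H)^{\alpha/(n-1)-1}d^{n+1}$ (not $(\log H)^{\alpha/(n-1)}$), which still dominates $O_n(d^{n+1})$ once $\alpha$ is taken larger than $n-1$.
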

\begin{proof}
One adapts the proof of Proposition~\ref{prop:g:Pila} in the same way that we adapted the proof of Proposition~\ref{prop:g} to a proof of
  Proposition~\ref{prop:g:K}, where one again uses Lemma 3.14 of
  \cite{Pared-Sas} instead of Corollary 2.5 of \cite{CCDN-dgc}.
\end{proof}
\begin{proof}[Proof of Theorem~\ref{thm:main:Pila:d2K}]
One adapts the proof of Theorem~\ref{thm:Pila:highdim;d2} in the same way that we adapted the proof for the case $K=\Q$ to the general case of Theorem~\ref{thm:main}.
\end{proof}


We can now also give the proofs
of 
Theorems~\ref{thm:main:dg} and~\ref{thm:main:dg:aff} for general $K$.

\begin{proof}[Proof of Theorem~\ref{thm:main:dg:aff}]
  If $d\ge 5$ then we may again suppose that $\log H < d$, by
  \cite[Theorem~1.11]{Pared-Sas}. But then
  Theorem~\ref{thm:main:dg:aff} follows directly from
  Theorem~\ref{thm:main:Pila:d2K}.  For the cases $d=4$, and $d=3$ one
  proceeds similarly as for the cases $d=3$ and $d=4$ in
  \cite{BCN-Sal}, using our Corollary~\ref{cor:AK} instead of Theorem
  2 of \cite{BCN-Sal} for the induction base case with $n=3$. For the cases $d=3$ and $d=4$, see also the proofs of Theorems 1.5, 4.1 and Propositions 3.1, 4.19 of \cite{CDHNV-dgc}, where these details are repeated and where an even more general case is proved (that is, with lighter assumptions on $f$ and on $f_d$).
\end{proof}

\begin{proof}[Proof of Theorem~\ref{thm:main:dg}]
  This follows directly from Theorem~\ref{thm:main:dg:aff} by working
  with the affine cone over $X$.
\end{proof}

\bibliographystyle{plain} \bibliography{nrefs}

\end{document}